\newtheorem{theorem}{Theorem}[section]
\newtheorem{corollary}[theorem]{Corollary}
\newtheorem{lemma}[theorem]{Lemma}
\newtheorem{proposition}[theorem]{Proposition}
\title{VC-dimension of subsets of Hamming graphs}
\author{Christopher Housholder, Layna Mangiapanello, and Steven Senger}
\date{\today}
\begin{document}

\maketitle

\begin{abstract}
    Following recent work on the VC-dimension of subsets of various pseudorandom graphs, we study the VC-dimension of Hamming graphs, which have proved somewhat resistant to the standard techniques in the literature. Our methods are elementary, and agree with or improve upon previously known results. In particular, for $H(2,q)$ we show tight bounds on the size of a subset of vertices to guarantee VC-dimension 2 or 3. We also prove an assortment of results for other parameters, with many of these being tight as well.
\end{abstract}

\section{Introduction}
The notion of Vapnik-Chervonenkis or VC-dimension, from \cite{VC}, has been used for a variety of applications, ranging from machine learning (see \cite{BS}) to combinatorial applications (see \cite{AlonSpencer}). Recently, there has been interest in estimating the VC-dimension of subsets of the vertex sets of various graphs, as in \cite{FIMW, IMS}. Many of the methods in this pursuit rely on the pseudorandomness of the underlying graph. While the Hamming graph is pseudorandom, it has been less susceptible to these techniques, as shown in \cite{PSTT}. Building on this work, we use elementary methods to prove estimates (many tight) on the VC-dimension of subsets of Hamming graphs.

For natural numbers $d$ and $q,$ one way to describe the Hamming graph $H(d,q)$ is with vertices $\mathbb Z_q^d,$ that is $d$-tuples of integers in the interval $[0,q),$ where vertices are adjacent if they differ in exactly one coordinate. More generally, for an integer $t,$ the Hamming graph $H(d,q,t)$ has the same vertex set, but vertices are adjacent when they differ in exactly $t$ coordinates.

Given a collection of subsets $\mathcal F$ of some ambient set $X,$ and some fixed subset $W\subseteq X,$ we say $\mathcal F$ {\it shatters} $W$ if for every $S$ in the power set of $W,$ there is a subset $F_S\in \mathcal F$ so that $F_S\cap W = S.$ That is, every possible subset of $W$ is in the intersection of $W$ and some member of the family $\mathcal F.$ The {\it Vapnik-Chervonenkis-dimension} or {\it VC-dimension} of the pair $(X,\mathcal F)$ is the size of the largest subset $W\subset X$ that can be shattered by $\mathcal F.$ We often call such a subset a {\it shattering set}.

For example, if $X$ is the plane, and $\mathcal F$ is the set of lines, the VC-dimension of $(X,\mathcal F)$ is 2, because for any pair of points, $x,y\in\mathbb R^2,$ we can find lines that contain both, neither, or exactly one of $x$ and $y.$ However, we could never find a collection of lines that shatters any set of three points $x,y,z\in \mathbb R^2,$ because if the three points do not lie on a line, there is no line that contains all three, and if they do lie on a line, there is no line that contains exactly two of them.

One type of problem that has been around for a while, but gained more attention in recent years, is the estimation of the VC-dimension of subsets of certain graphs. Recall that the {\it neighborhood} of a vertex $x$, denoted $n(x),$ is the set of vertices adjacent to $x.$ In particular, given a graph $G(V,E),$ where $V$ is the set of vertices, $n(V)$ is the set of neighborhoods of vertices, and $E$ is the set of edges, there is much interest in determining the VC-dimension of the pair $(V, n(V)).$ When context is clear, we refer to this as the VC-dimension of the graph $G.$ See \cite{ABC, FIMW, IMS, ABCIJLMMMRV} and the references included therein for more information.

In \cite{ABM}, the authors show that for most choices of $d$ and $q,$ the VC-dimension of $H(d,q)$ is 3. In \cite{PSTT}, the authors gave a suite of results about the VC-dimension of induced subgraphs of a family of graphs called pseudorandom graphs (See \cite{H80, KS} for more on these and the techniques employed in their study.). However, they noted that while the Hamming graph is pseudorandom (see \cite{BCIM} and the references contained therein), it was resistant to the standard techniques, in that they yielded comparatively weak results. This was demonstrated by proving stronger results with elementary techniques.

The main goal of this note is to improve upon and generalize some the initial results on Hamming graphs from \cite{PSTT}. The general strategy for each proof will be to show that some type of point configuration must (or respectively, must not) be present in some subset of the vertices of the appropriate Hamming graph. The following result first appeared in \cite{PSTT}, but only for $d=2,$ and the proof was a longer induction argument, based on finding a more intricate point configuration, than the proof we give here.

\begin{proposition}\label{prop22}
    Given a natural number $q\geq 3,$ and subset $U$ of the vertices of $H(2,q)$ of size $|U|\geq 2q^{d-1}+1$, the VC-dimension of $(U,n(U))$ is at least 2.
\end{proposition}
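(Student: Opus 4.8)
The plan is to exhibit, inside any such $U$, a pair of vertices together with all the auxiliary vertices needed to realize the four subsets appearing in the definition of shattering. I would first isolate a general principle: if $x,y$ are two adjacent vertices of $U$, then the subsets $\{x\}$ and $\{y\}$ are realized for free, since $x\in n(y)$ and $y\notin n(y)$ while $y\in n(x)$ and $x\notin n(x)$ (Hamming graphs have no loops). Hence, to shatter an adjacent pair $\{x,y\}$ it suffices to produce a vertex of $U$ adjacent to both $x$ and $y$ (realizing $\{x,y\}$) and a vertex of $U$ adjacent to neither (realizing $\emptyset$).

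Next I would apply pigeonhole to the $q$ ``columns'' of $H(2,q)$, i.e.\ the classes obtained by fixing the first coordinate. Since $|U|\geq 2q+1>2q$, some column $C$ contains at least three points of $U$, say $p_1=(a,b_1)$, $p_2=(a,b_2)$, $p_3=(a,b_3)$ with $b_1,b_2,b_3$ pairwise distinct. Any two of these differ only in the second coordinate and so are adjacent, and the third is adjacent to both. Therefore, for each pair $\{p_i,p_j\}$, the subsets $\{p_i\}$, $\{p_j\}$, and $\{p_i,p_j\}$ are all realized (the last by the remaining vertex $p_k$). Only the empty set remains in question.

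I would then analyze exactly when $\emptyset$ fails to be realized for an adjacent pair $\{p_i,p_j\}$: this occurs precisely when $U\subseteq n(p_i)\cup n(p_j)$, and a short direct computation identifies $n(p_i)\cup n(p_j)$ with the union of the column $C$ and the two rows (second coordinate fixed) through $p_i$ and $p_j$. Thus a failure for $\{p_1,p_2\}$ forces $U\subseteq C\cup R_{b_1}\cup R_{b_2}$, and a failure for $\{p_1,p_3\}$ forces $U\subseteq C\cup R_{b_1}\cup R_{b_3}$; if both failed, intersecting gives $U\subseteq C\cup R_{b_1}$, a set of only $q+(q-1)=2q-1$ vertices, contradicting $|U|\geq 2q+1$. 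So at least one of the pairs $\{p_1,p_2\}$, $\{p_1,p_3\}$ is shattered, and the VC-dimension of $(U,n(U))$ is at least $2$.

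I expect the only genuine content — and hence the main thing to get right — to be the identification of $n(p_i)\cup n(p_j)$ with the ``column plus two rows'' set, together with the observation that intersecting two such sets collapses the rows to a single row, so that the hypothesis $|U|\geq 2q+1$ is exactly the bound that makes the final count fail. The remaining points are routine, though one should check the degenerate cases (for instance, that the ``adjacent to neither'' witness does not coincide with $p_1,p_2,p_3$), which is immediate since that witness lies outside the column $C$.
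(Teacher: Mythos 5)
Your proof is correct and follows essentially the same route as the paper: pigeonhole over the $q$ lines in one direction to find three collinear points of $U$, which automatically realize the singletons and the doubleton for any pair among them, leaving only the empty set. The paper produces the empty-set witness slightly differently --- it takes any point of $U$ off the line (one exists since a line holds at most $q<|U|$ points) and notes it can be collinear with at most one of the three --- whereas you rule out simultaneous failure for the pairs $\{p_1,p_2\}$ and $\{p_1,p_3\}$ via the containment $U\subseteq C\cup R_{b_1}$, a set of only $2q-1<|U|$ vertices; both arguments are valid, and your identification of $n(p_i)\cup n(p_j)$ with the column together with the two rows through $p_i$ and $p_j$ is accurate.
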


We now focus on the $d=2$ case, where we prove that Proposition \ref{prop22} is tight in general, but that the bound can be improved if $q$ is odd.

\begin{theorem}\label{thm22odd}
    Given an odd number $q\geq 3,$ and subset $U$ of the vertices of $H(2,q)$ of size $|U|\geq 2q$, the VC-dimension of $(U,n(U))$ is at least 2. Moreover, this bound and the bound in Proposition \ref{prop22} for $d=2$ are tight.
\end{theorem}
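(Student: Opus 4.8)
The plan is to translate shattering into the combinatorics of rows and columns. For a vertex $v=(v_1,v_2)$ of $H(2,q)$, the neighborhood $n(v)$ is the union of the row $\{(v_1,t):t\ne v_2\}$ and the column $\{(s,v_2):s\ne v_1\}$ through $v$. Fix distinct $x=(x_1,x_2)$ and $y=(y_1,y_2)$ in $U$; I would begin by tabulating, for each of the four ``atoms'' $\emptyset,\{x\},\{y\},\{x,y\}$, exactly which vertices $w$ satisfy $n(w)\cap\{x,y\}$ equal to that atom. Two regimes appear. If $x$ and $y$ lie in a common row or column they are adjacent, so $w=y$ and $w=x$ already realize $\{x\}$ and $\{y\}$, and the only possible obstruction to shattering is that the atom $\{x,y\}$ is empty; a quick check shows this happens precisely when the common line of $x$ and $y$ meets $U$ in just the two points $x,y$. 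If instead $x$ and $y$ differ in both coordinates, then $w=x$ realizes $\emptyset$, so the possible obstructions are the atoms $\{x\}$, $\{y\}$, $\{x,y\}$.

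Next comes a reduction to an extremal configuration. Assume toward a contradiction that $(U,n(U))$ has VC-dimension at most $1$ while $|U|\ge 2q$, with $q$ odd. I claim every row and every column meets $U$ in at most two points: if some row met $U$ in three points, in columns $a,b,c$ say, then by the regime above the pair lying in columns $a,b$ can only fail to be shattered through the atom $\emptyset$, which forces $U$ into that row together with columns $a$ and $b$; doing this also for the pairs in columns $(a,c)$ and $(b,c)$ forces $U$ into the single row, so $|U|\le q<2q$, a contradiction. Hence all lines meet $U$ in at most two points, so $|U|\le 2q$, whence $|U|=2q$ and every row and every column meets $U$ in exactly two points. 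Regard $U$ as the edge set of a bipartite graph $B$ on the $q$ rows and $q$ columns, with $rc\in E(B)$ iff $(r,c)\in U$; then $B$ is $2$-regular, i.e.\ a vertex-disjoint union of even cycles.

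The heart of the argument is a ``corner'' criterion. Given $x=(x_1,x_2),y=(y_1,y_2)$ differing in both coordinates, call $(x_1,y_2)$ and $(y_1,x_2)$ the two \emph{remaining corners} of the rectangle they span. Using that each of $x,y$ has exactly two neighbors in $U$, one computes: the atom $\{x,y\}$ is nonempty iff at least one remaining corner lies in $U$, while the atom $\{x\}$ (and symmetrically $\{y\}$) is nonempty iff not both remaining corners lie in $U$. Consequently $\{x,y\}$ is shattered iff exactly one of its two remaining corners lies in $U$; in $B$ this says the two crossing edges between the endpoints of the edges $x$ and $y$ are split, one in and one out. If $B$ has a cycle of length at least $6$, then taking $x,y$ to be two edges at distance two along that cycle realizes exactly this — one crossing edge is the connecting edge of the cycle, the other is absent because the cycle is not a $4$-cycle — so some pair is shattered. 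Conversely, if every component of $B$ is a $4$-cycle, one checks case by case (a same-line pair fails through $\{x,y\}$; a differing pair inside one $4$-cycle has both remaining corners present; a differing pair in two different $4$-cycles has neither) that no pair is shattered, so the VC-dimension is at most $1$.

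The theorem now follows. When $q$ is odd, a $2$-regular bipartite graph on $q$ rows and $q$ columns cannot be a disjoint union of $4$-cycles, since each such cycle absorbs two rows; hence $B$ has a cycle of length at least $6$, some pair of $U$ is shattered, contradicting VC-dimension at most $1$ and establishing the bound $2q$. For tightness of Proposition~\ref{prop22} with $d=2$, take $q$ even, split the rows into $q/2$ pairs and the columns into $q/2$ pairs, and let $U$ be the union of the resulting $q/2$ full $2\times 2$ blocks; then $|U|=2q$, $B$ is a disjoint union of $4$-cycles, and the converse above gives VC-dimension at most $1$, so the threshold $2q+1$ cannot be lowered. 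For tightness of the present theorem, take $q$ odd, form $(q-1)/2$ such $2\times 2$ blocks on the first $q-1$ rows and columns, and adjoin the single vertex $(q-1,q-1)$; then $|U|=2q-1$, this last vertex has no neighbor in $U$ so the atoms $\{(q-1,q-1)\}$ and $\{(q-1,q-1),p\}$ are empty for every $p$, hence no pair through it is shattered, while the rest is a union of $4$-cycles; so the VC-dimension is at most $1$ and $2q$ cannot be lowered. The one delicate point is the corner computation of the third paragraph: it requires correctly listing the (exactly two) $U$-neighbors of each of $x$ and $y$ and deciding which are adjacent to the other vertex; the parity input and all else is routine bookkeeping.
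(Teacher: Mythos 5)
Your proof is correct and follows essentially the same route as the paper's: you reduce (by the same reasoning as Lemma~\ref{lemma22}) to the case where every row and every column meets $U$ in exactly two points, locate a rectangle with exactly one of its two remaining corners in $U$ using the parity obstruction $2q\not\equiv 0\pmod 4$, verify that its diagonal pair is shattered, and exhibit the identical tightness constructions ($2\times 2$ blocks along the diagonal, plus one isolated point when $q$ is odd). Your packaging of the existence step as ``a $2$-regular bipartite graph on $q+q$ vertices with $q$ odd cannot be a disjoint union of $4$-cycles'' is a cleaner, more rigorous version of the paper's greedy elimination of complete quadruples, and your use of $w=x$ to realize the empty atom avoids the paper's need for a sixth point.
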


Of course, if one has a larger subset of $H(2,q),$ we can guarantee the maximum possible VC-dimension. We also demonstrate that this bound is tight.

\begin{theorem}\label{thm23}
    Given a natural number $q\geq 4,$ and subset $U$ of the vertices of $H(2,q)$ of size $|U|\geq 3q+1$, the VC-dimension of $(U,n(U))$ is 3. Moreover this bound is tight.
\end{theorem}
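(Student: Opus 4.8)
The plan is to prove the upper and lower bounds separately and then establish tightness by an explicit construction. For the upper bound I would observe that if $W\subseteq U$ is shattered by $n(U)=\{n(u)\cap U:u\in U\}$, then since $W\subseteq U$ we have $n(u)\cap U\cap W=n(u)\cap W$, so $W$ is shattered by the sub-family $\{n(u):u\in U\}$ and hence by the full family $n(V)$ of all neighborhoods in $H(2,q)$; thus $\mathrm{VC}(U,n(U))\le\mathrm{VC}(H(2,q))\le 3$. The last inequality can be quoted from \cite{ABM}, or checked directly: a neighborhood in $H(2,q)$ is a ``cross'' (the union of a row and a column with the center deleted), and a short case analysis of how a cross can contain four prescribed vertices shows no four vertices can be shattered. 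It then remains to produce a shattered set of size $3$.

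For the lower bound I would use the configuration $p_1=(a,b_1)$, $p_2=(a,b_2)$ in a common row $a$, together with $p_3=(a',b_3)$ where $a'\neq a$ and $b_1,b_2,b_3$ are distinct columns. Setting $W=\{p_1,p_2,p_3\}$, three of the eight witnesses are immediate: $p_3$ realizes $\emptyset$ (it lies in neither row $a$ nor columns $b_1,b_2$), $p_1$ realizes $\{p_2\}$, and $p_2$ realizes $\{p_1\}$. The remaining five are realized by $(a,b_3)$ for $W$, by $(a,b_4)$ with $b_4\notin\{b_1,b_2,b_3\}$ for $\{p_1,p_2\}$, by $(a',b_1)$ for $\{p_1,p_3\}$, by $(a',b_2)$ for $\{p_2,p_3\}$, and by either $(a',c)$ with $c\notin\{b_1,b_2,b_3\}$ or $(a'',b_3)$ with $a''\notin\{a,a'\}$ for $\{p_3\}$; each adjacency (and each non-adjacency) here is a one-line verification in $H(2,q)$. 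So $W$ is shattered as soon as $U$ contains a row $a$ meeting columns $b_1,b_2,b_3,b_4$, a second row $a'$ meeting columns $b_1,b_2,b_3$, and one further point of the last type.

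The hard part will be locating such a configuration inside an arbitrary $U$ with $|U|\ge 3q+1$. Pigeonholing $3q+1$ points over $q$ rows yields a row $a$ with at least four points, but the obstacle is that a dense row need not be ``aligned'' with any other row, i.e.\ share three of its columns. I expect this to be handled by a counting argument together with the symmetric (transposed) configuration of two points in a common column: if no admissible pair of rows exists, this places severe restrictions on how each of the other rows meets the columns of row $a$, and feeding those restrictions back into $|U|\ge 3q+1$ either contradicts the count outright or forces a dense column aligned with another column, in which case the column version of the argument applies. Carrying out this case analysis cleanly, and dealing with the tight values of $q$ (in particular $q=4$, where there is no ``spare'' column), is the main difficulty.

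For tightness it is enough to exhibit a $U$ with $|U|=3q$ and $\mathrm{VC}(U,n(U))\le 2$. I would take $U$ to consist of all $(i,j)\in\mathbb Z_q^2$ with $j-i\in\{0,1,2\}$ modulo $q$, a union of three ``diagonals''; since $q\ge 4$ this set has exactly three points in each row and exactly three points in each column, so $|U|=3q$. To see no triple $W=\{p_1,p_2,p_3\}$ is shattered, note that realizing $W$ requires a vertex whose cross contains all of $p_1,p_2,p_3$; analysing how a row-plus-column can cover three points shows this forces either all three onto a single line or exactly two of them, say $p_1,p_2$, onto a common row $R$ with $p_3$ off $R$ and in a new column (the remaining cases leave $W$ itself unrealizable). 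In the first case that line would need a fourth point to serve as the $W$-witness, which it does not have. In the second case the third point of row $R$ must be the $W$-witness, so row $R$ has no column outside $\{b_1,b_2,b_3\}$; but realizing $\{p_1,p_2\}$ and not $p_3$ demands exactly such a point in row $R$. Hence no triple is shattered, $\mathrm{VC}(U,n(U))\le 2$, and the threshold $3q+1$ cannot be lowered to $3q$.
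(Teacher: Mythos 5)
Your tightness construction (three diagonals, exactly three points per row and column) and the case analysis showing it admits no shattered triple are essentially the paper's Lemma \ref{3tight} combined with Lemma \ref{lemma23}, and your monotonicity observation for the upper bound is fine. The gap is in the lower bound, and it is not merely that you deferred the ``hard part'': the configuration you propose to locate need not exist. You require a row $a$ populated in columns $b_1,b_2,b_3,b_4$ together with a second row $a'$ populated in $b_1,b_2,b_3$ (or the transpose), i.e.\ two rows sharing three common populated columns, one of them carrying a fourth point. Take $q=7$ and let $U$ be the $21$ incidences of the Fano plane (each row and each column has exactly $3$ points, and any two rows, like any two columns, share exactly one common populated position) plus one extra point, so that $|U|=22=3q+1$. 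The unique row with four points shares at most two populated columns with any other row, and likewise for the unique column with four points, so neither the row nor the column version of your configuration occurs. Since, as you correctly observe, the witnesses for a triple of the shape ``two points in a common row plus one point off that row and off their columns'' are forced to sit exactly where you placed them, no triple of that shape is shattered in this example, and the strategy you outline cannot be completed.

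The paper avoids this obstruction by shattering a \emph{collinear} triple instead: the ``fist'' of Lemma \ref{fist} asks only for a line $L$ containing four points $x,y,z,u_3$ such that each of $x,y,z$ has some other point of $U$ on the perpendicular line through it, in arbitrary and mutually unaligned positions, plus one point $u_0$ off all four lines. This is a strictly weaker alignment requirement than yours (it is present in the Fano-plus-one example above), and its existence in every $U$ with $|U|\geq 3q+1$ is proved by induction on $q$ with a hand-checked base case at $q=4$ and a reduction step that deletes a sparse row--column pair. That induction is the real content of the theorem and is entirely absent from your proposal; if you want to salvage your approach, you should add collinear triples to your list of candidate shattering sets and aim to show that one of the two shapes must always be realizable.
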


We pause to note that by pigeonholing, Theorem \ref{thm23} yields nontrivial results for higher ambient dimensions. We record these in the following corollary.

\begin{corollary}\label{phpCor}
    Given natural numbers $d, q\geq 4,$ and a subset of $U$ of the vertices of $H(d,q)$ of size $|U|\geq 3q^{d-1}+1,$ then the VC-dimension of $(U, n(U))$ is 3.
\end{corollary}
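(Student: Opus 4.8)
The plan is to deduce Corollary \ref{phpCor} directly from Theorem \ref{thm23} by a pigeonhole argument on a single "slice" of the Hamming graph $H(d,q)$. First I would set up the slicing: write each vertex of $H(d,q)$ as a pair $(x', x_d)$ where $x' \in \mathbb{Z}_q^{d-1}$ records the first $d-1$ coordinates and $x_d \in \mathbb{Z}_q$ records the last one. For a fixed value $a \in \mathbb{Z}_q^{d-1}$, call the set of vertices with first $d-1$ coordinates equal to $a$ the \emph{fiber} over $a$; each fiber is a clique on $q$ vertices (the vertices in a fiber differ only in the last coordinate, hence in exactly one coordinate, hence are pairwise adjacent), and these $q^{d-1}$ fibers partition the vertex set. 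Since $|U| \ge 3q^{d-1}+1 = (3q)q^{d-1}+1$, by pigeonhole some fiber $F_a$ contains at least $3q+1$ vertices of $U$, i.e., $|U \cap F_a| \ge 3q+1$. Note that since each fiber only has $q$ vertices this forces $3q+1 \le q$ to fail — wait, that is impossible, so I must instead pigeonhole differently.

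Correcting the setup: the right object to pigeonhole into is a \emph{copy of $H(2,q)$ sitting inside $H(d,q)$}, not a single fiber. Fix any two distinguished coordinates, say coordinates $d-1$ and $d$, and for each $b \in \mathbb{Z}_q^{d-2}$ let $G_b$ be the set of vertices whose first $d-2$ coordinates equal $b$. Each $G_b$ has $q^2$ vertices, the sets $\{G_b\}$ partition the vertex set into $q^{d-2}$ parts, and — this is the key structural point — the subgraph of $H(d,q)$ induced on $G_b$ is exactly isomorphic to $H(2,q)$, since two vertices of $G_b$ differ in exactly one coordinate among the $d$ coordinates if and only if they differ in exactly one of the two free coordinates. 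Since $|U| \ge 3q^{d-1}+1 = (3q^{d-1}) + 1$ and there are only $q^{d-2}$ parts, by pigeonhole some part $G_b$ satisfies $|U \cap G_b| \ge \lceil (3q^{d-1}+1)/q^{d-2} \rceil = 3q+1$.

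Now I would invoke Theorem \ref{thm23} applied to the set $U \cap G_b$ inside the Hamming graph $H(2,q)$ (using the isomorphism $G_b \cong H(2,q)$ and that $q \ge 4$): it gives that the VC-dimension of $(U \cap G_b, n_{H(2,q)}(U\cap G_b))$ is $3$. The one subtlety to check is that shattering \emph{within} the induced subgraph transfers to shattering in the ambient graph: a set $W \subseteq U \cap G_b$ shattered using neighborhoods of vertices of $U \cap G_b$ computed inside $H(2,q)$ is also shattered using ambient neighborhoods, because for $v \in G_b$ the ambient neighborhood $n_{H(d,q)}(v)$ intersected with $G_b$ equals $n_{H(2,q)}(v)$ — every ambient neighbor of $v$ that lies in $G_b$ is a genuine neighbor in the induced copy, and vice versa — so the traces on $W \subseteq G_b$ are identical. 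Hence the VC-dimension of $(U, n(U))$ is at least $3$. For the upper bound, I would cite that $H(d,q)$ itself has VC-dimension at most $3$ (from \cite{ABM}, as mentioned in the introduction), or note that the $K_{4}$-free-type obstruction used in Theorem \ref{thm23}'s upper-bound direction applies verbatim in any ambient dimension; combining gives VC-dimension exactly $3$.

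The main obstacle I anticipate is not the counting — that is a one-line pigeonhole — but rather being careful about the \emph{direction} of the embedding and the neighborhood restriction. One must make sure that the pigeonhole is applied to blocks that induce genuine copies of $H(2,q)$ (fixing $d-2$ coordinates, leaving $2$ free), rather than to fibers or to the wrong set of $d-1$ coordinates, and one must verify the restriction identity $n_{H(d,q)}(v) \cap G_b = n_{H(2,q)}(v)$ so that a shattering inside the copy is automatically a shattering in the ambient graph. Once those two points are nailed down, the corollary follows immediately; the additional hypothesis $d \ge 4$ in the statement presumably just lines up with the hypothesis $q \ge 4$ of Theorem \ref{thm23} and ensures $q^{d-2} \ge 1$ nontrivially, and I would double-check whether $d \ge 2$ together with $q \ge 4$ would already suffice.
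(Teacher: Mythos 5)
Your proof is correct and follows essentially the same route as the paper: partition $H(d,q)$ into $q^{d-2}$ induced copies of $H(2,q)$, pigeonhole to find one containing at least $3q+1$ points of $U$, and apply Theorem \ref{thm23} there. You are in fact somewhat more careful than the paper's one-line proof, which silently omits both the verification that $n_{H(d,q)}(v)\cap G_b = n_{H(2,q)}(v)$ (so that shattering transfers to the ambient graph) and any justification of the upper bound that the VC-dimension does not exceed $3$; just delete the false start with single-coordinate fibers from the final write-up.
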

\begin{proof}
The graph $H(d,q)$ has $q^d-2$ isomorphic copies of $H(2,q),$ so by pigeonholing, there must be an isomorphic copy of $H(2,q)$ with at least $3q+1$ points on it, and we apply Theorem \ref{thm23} there.%\layna{I don't think I understand what this corollary is trying to say. The corollary is talking about VC 3 but we are saying there is 2q+1 points. How do we get from two to three? What is the first and second statement that we are referencing. This part feels unclear.}
% Yeah, I had two different statements before. I later spun off the first statement in the corollary, but forgot to modify the proof to reflect that.
\end{proof}

While we currently do not know how sharp the results about VC-dimension 2 are in higher ambient dimensions are, we have an explicit construction that shows that we at least have the correct exponent of $q.$

\begin{proposition}\label{sharp3}
    Given an even natural number $q\geq 4,$ there exists a subset $U$ of the vertices of $H(3,q)$ of size $|U|= \frac{5}{4}q^{2}$ so that the VC-dimension of $(U,n(U))$ is 1.
\end{proposition}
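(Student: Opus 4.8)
The plan is to first pin down exactly which configurations inside $U$ force VC-dimension $2$, and then to exhibit a suitable $U$ by a packing argument that exploits the parity of $q$.

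\medskip\noindent\textbf{Reduction.} The hypothesis forces $U$ to contain an edge, so $\mathrm{VC}(U,n(U))\ge 1$ is immediate and the whole content is the upper bound: no two-element set $\{x,y\}\subseteq U$ is shattered. Analyzing shattering directly, in every case one needs a vertex $w\in U$ adjacent to both $x$ and $y$. If $\{x,y\}$ is an edge, then $\{x,y,w\}$ is a triangle, and since every triangle of $H(3,q)$ is a set of three collinear points, it suffices that no line of $H(3,q)$ contains more than two points of $U$. If $\{x,y\}$ is not an edge, a common neighbor $w\in U$ forces $x$ and $y$ to differ in exactly two coordinates, with $w$ one of the (at most two) ``rectangle-completers'' of the pair; and shattering moreover requires a $U$-neighbor of $x$ that is not a neighbor of $y$, and symmetrically for $y$. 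Hence it suffices to build $U$ so that (i) no line meets $U$ in more than two points, and (ii) whenever $x,y\in U$ differ in exactly two coordinates and have a common neighbor in $U$, at least one of $x,y$ has all of its $U$-neighbors among the two rectangle-completers of the pair.

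\medskip\noindent\textbf{A convenient sufficient condition.} Conditions (i) and (ii) both hold, in particular, whenever $U$ is a disjoint union of pairwise non-adjacent ``gadgets'', each gadget a copy of $K_2$, $P_3$, $P_4$, $C_4$, $K_{1,3}$, or a single vertex: for each of these finitely many graphs one checks, from the description of shattering above, that its vertex set has VC-dimension $\le 1$, and since a shattered pair would lie inside one gadget, $\mathrm{VC}(U,n(U))\le 1$, hence $=1$ once some gadget has an edge. The claw $K_{1,3}$ is the only gadget with a vertex of degree exceeding $1$, and it cannot be avoided: one checks separately (clique cover by lines in a fixed direction, plus a short count) that a subset of $H(3,q)$ with maximum induced degree $1$ has fewer than $\tfrac54 q^2$ vertices, so some higher-degree structure is unavoidable.

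\medskip\noindent\textbf{The construction.} Write $q=2m$ and split $\mathbb Z_q$ into two halves of size $m$. This partitions the vertex set into eight boxes $X_1\times X_2\times X_3$ with $X_i$ a half; the four boxes whose ``parity vector'' has even weight are pairwise non-adjacent (two vertices from two distinct such boxes differ in at least two coordinates), and each such box is an isomorphic copy of $H(3,m)$. Inside these four mutually non-adjacent boxes I would place a pairwise non-adjacent collection of the gadgets above together with isolated vertices filling up independent sets (recall $H(3,m)$ has independence number $m^2$), choosing the numbers so that the total is exactly $\tfrac54 q^2 = 5m^2$ — schematically, some of the boxes carry large independent sets of isolated vertices while the others carry an explicit pairwise non-adjacent packing of claws and $4$-cycles (the part supplying the required edge and the extra $\approx\tfrac14 q^2$ vertices beyond an independent set). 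Parity of $q$ is what makes all the relevant quantities integers and the packings fit together. One then checks directly that the four boxes do not interact, that within a box the gadgets are pairwise non-adjacent, and that each gadget has VC-dimension $\le 1$; this gives $\mathrm{VC}(U,n(U))=1$ and $|U|=\tfrac54 q^2$.

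\medskip\noindent The reduction and the case-check of the individual gadgets are routine; the real work — and the only place where $q$ even is used essentially — is the bookkeeping in the last step: laying down the edge-carrying gadgets so that they are simultaneously pairwise non-adjacent, non-adjacent to the isolated vertices, and of exactly the right total size.
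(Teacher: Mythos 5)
Your reduction is sound: every triangle of $H(3,q)$ is collinear, a non-adjacent pair with a common neighbor must differ in exactly two coordinates with the common neighbor one of the two rectangle-completers, and a union of pairwise non-adjacent gadgets each of VC-dimension at most $1$ has VC-dimension at most $1$ overall. This is the same skeleton as the paper's argument, which also reduces to checking a single small component. The gap is that the proposition is an existence statement whose entire content is the explicit construction, and you never produce one: the final step is described only ``schematically,'' and you yourself flag that ``the real work \dots is the bookkeeping in the last step.'' That bookkeeping is not routine. Writing $m=q/2$, each parity box is a copy of $H(3,m)$, and a line count (each of the $3m^2$ lines of $H(3,m)$ meets a gadget union in at most two points, and two points on a line are an edge) gives $|U\cap B|\le m^2+e_B/3$; to reach $5m^2$ total you therefore need at least $3m^2$ edges distributed over the four boxes, i.e.\ a genuinely dense, pairwise non-adjacent packing of $C_4$'s and claws, and the per-box quota $5m^2/4$ is not even an integer when $m$ is odd (e.g.\ $q=6$), so the boxes must be unbalanced and the claim that ``parity of $q$ makes all the relevant quantities integers'' does not hold at the box level. (Also, $K_{1,3}$ is not ``the only gadget with a vertex of degree exceeding $1$'': $P_3$, $P_4$, and $C_4$ all have degree-$2$ vertices.) Until the packing is exhibited and verified, the proof is incomplete.

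For comparison, the paper avoids all of this by using a single $5$-vertex, $5$-edge gadget that your list omits: a $C_4$ with a pendant vertex, placed inside a $2\times2\times2$ cube ($2q$ points of the diagonal $U_1(q)$ construction in one $H(2,q)$ slice plus one point above each square). The cubes are arranged along a generalized diagonal so that no two chosen cubes share a rectilinear line, giving exactly $q^2/4$ cubes and hence $5q^2/4$ points, with the components manifestly pairwise non-adjacent; the only case analysis left is the five-vertex gadget itself. If you want to salvage your route, the cleanest fix is to adopt that gadget (or verify an explicit $C_4$/claw packing of the required density in each parity box), since the existence of the packing is precisely what the proposition asserts.
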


While Proposition does require $q$ to be even, one can prove a similar result for $q$ odd by embedding the construction above into $H(d,q-1)$ and adding one more point with all coordinates $(q-1).$ However, the constant drops off when $d$ grows large, so we also give the following result, which comes from a simpler construction that has no parity constraint on $q$ when $d\geq 4$.

\begin{proposition}\label{sharp4+}
    Given natural numbers $d, q\geq 2,$ there exists a subset $U$ of the vertices of $H(d,q)$ of size $|U|=q^{d-1}$ so that the VC-dimension of $(U,n(U))$ is 1.
\end{proposition}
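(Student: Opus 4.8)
We need to construct a subset $U$ of $H(d,q)$ with $|U| = q^{d-1}$ whose VC-dimension is exactly 1. The natural candidate is an "axis-aligned hyperplane slice": fix the first coordinate to be $0$ and let $U = \{0\} \times \mathbb{Z}_q^{d-1}$. This has exactly $q^{d-1}$ vertices.

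The plan is to verify two things. First, VC-dimension at least 1: we must show some single vertex $w \in U$ is shattered, i.e., there is a vertex of $H(d,q)$ whose neighborhood contains $w$ and another whose neighborhood avoids $w$. The first is trivial — take any neighbor of $w$ — and the second is trivial too, since plenty of vertices are non-adjacent to $w$ (e.g. $w$ itself, or anything differing from $w$ in $\geq 2$ coordinates); one just needs $q \geq 2$ and $d \geq 2$ so that such vertices exist. Actually I should be a little careful: to shatter $\{w\}$ we need a neighborhood (i.e., $n(x)$ for some $x \in V$) that contains $w$ and one that does not, with no constraint that $x \in U$. Since $n(x)$ can be any vertex-neighborhood in the whole graph, both requirements are easy.

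Second, and this is the crux, VC-dimension at most 1: no two vertices of $U$ can be shattered. Suppose $w, w' \in U$ are distinct; both have first coordinate $0$. To shatter $\{w, w'\}$ we would need some vertex $x$ adjacent to both $w$ and $w'$. A vertex adjacent to $w$ differs from $w$ in exactly one coordinate; likewise for $w'$. Since $w$ and $w'$ agree in the first coordinate but differ somewhere, one shows by a short case analysis on how many coordinates $w$ and $w'$ differ in that no common neighbor $x$ exists unless $w, w'$ differ in exactly... hmm, wait — this is false in general, because if $w$ and $w'$ differ in exactly one coordinate they ARE adjacent and can have common neighbors only if... Let me reconsider. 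The correct claim is: $\{w,w'\}$ fails to be shattered because the subset $\{w,w'\}$ itself cannot be realized, i.e., there is no $x$ with $n(x) \supseteq \{w,w'\}$. If $w,w'$ differ in $\geq 3$ coordinates, no $x$ is adjacent to both (adjacency to $w$ forces $x$ to agree with $w$ in all but one coordinate, hence disagree with $w'$ in $\geq 2$). If they differ in exactly $2$ coordinates, a common neighbor $x$ must differ from $w$ in one of those two coordinates and equal $w'$ there — possible — but then $x$ differs from $w'$ in the other coordinate, so $x$ is adjacent to $w'$; so common neighbors DO exist here. If they differ in exactly $1$ coordinate, $x$ must agree with $w$ except possibly in that coordinate, and equal $w'$'s value there or... again common neighbors can exist. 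So the obstruction is NOT uniform over all pairs.

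This means the real argument must be subtler: for $\{w,w'\}$ to be shattered we need to realize \emph{all four} subsets $\emptyset, \{w\}, \{w'\}, \{w,w'\}$, and the obstacle will be that whenever $\{w,w'\}$ \emph{is} realizable by a common neighbor $x$ (the cases where $w,w'$ differ in $1$ or $2$ coordinates), that common neighbor is forced to agree with both $w$ and $w'$ in the first coordinate — so $x$ has first coordinate $0$, i.e., $x \in U$... which is not yet a contradiction since nothing restricts $x$. Hmm. I think the cleanest route is: show directly that for any two distinct $w,w' \in U$, \emph{either} $\{w,w'\}$ has no common neighbor (differ in $\geq 3$ coords) \emph{or} every common neighbor $x$ of $w,w'$ also satisfies some property incompatible with simultaneously having neighbors realizing $\{w\}$ alone and $\{w'\}$ alone — but that can't be right either since singletons are always realizable. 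So the honest conclusion is that this particular $U$ might actually have VC-dimension $\geq 2$ when $q \geq 3$. I expect the intended construction is different. The plan, then, is to reconsider: take $U$ to be a union of $q^{d-1}$ vertices arranged so that \emph{no} pair has a common neighbor — for instance, a \emph{perfect code}-like or \emph{spread} configuration where all chosen vertices are pairwise at Hamming distance $\geq 3$. In $H(d,q)$ with $d \geq 2$ one can take $U = \{(a_1,\dots,a_d) : a_1 + 2a_2 + \cdots + d a_d \equiv 0 \pmod{q},\ \text{first coordinate free}\}$ — more simply, $U = \{(a_1,\dots,a_{d-1}, f(a_1,\dots,a_{d-1}))\}$ for a function $f$ making the last coordinate a "checksum"; then any two distinct points of $U$ differ in $\geq 2$ coordinates among the first $d-1$, hence in $\geq 2$ coordinates total, and if we further pick $f$ with sufficient separation (e.g. $f(\vec a) = \sum i a_i \bmod q$), two points agreeing in all but one of the first $d-1$ coordinates will have different last coordinates, so they differ in exactly $2$ coordinates — still having common neighbors. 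To kill common neighbors entirely we'd need pairwise distance $\geq 3$, which forces $|U| \leq q^{d-1}/(\text{something})$ by sphere-packing, contradicting $|U| = q^{d-1}$ for $d \geq 2$.

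Therefore the correct plan must allow common neighbors but exploit the global structure. I would proceed as follows: take $U = \{0\}\times \mathbb{Z}_q^{d-1}$ as first proposed, and prove VC-dimension $\leq 1$ by showing that if $x$ is a common neighbor of distinct $w,w' \in U$ then $x$ has first coordinate $0$ and $x \notin U$ is impossible to leverage — instead, observe that a single vertex $y$ realizing the subset $\{w\}$ must be adjacent to $w$, hence (if $y$ has first coordinate $0$) agrees with $w$ off one coordinate; the key point is to count, for a fixed pair $w,w'$, the set of all vertices adjacent to $w$ and show it cannot be "separated from $w'$" while a common neighbor also exists — this is a finite check. The main obstacle I anticipate is exactly this: pinning down why the four required neighborhoods cannot coexist, since na\"ively each of the four subsets of $\{w,w'\}$ is individually realizable. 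I suspect the resolution is a parity or coordinate-sum invariant: for $w,w'$ differing in exactly two coordinates $i,j$, a common neighbor must change coordinate $i$ or $j$ of $w$ to match $w'$; for $w,w'$ differing in exactly one coordinate they're adjacent and a common neighbor changes that coordinate or... — I would organize the write-up around a clean lemma that \emph{in $H(d,q)$, two vertices at Hamming distance $\leq 2$ always have a common neighbor, and at distance $1$ they are adjacent}, then argue that the specific slice $U$ forces any shattered pair to produce a \emph{forbidden} configuration. The honest main obstacle is verifying that the construction genuinely has VC-dimension $1$ and not $2$; if it does not, the fallback plan is the simplest valid construction, a single edge's worth of structure padded out — but since the proposition explicitly claims size $q^{d-1}$, I will commit to the hyperplane slice and grind the (short) case analysis on distance $1$ and distance $2$ pairs to show shattering fails, which is where essentially all the work lies.
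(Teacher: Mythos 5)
There is a genuine gap, and it stems from a misreading of the object being bounded. The proposition concerns the pair $(U,n(U))$: the test family consists of neighborhoods of vertices \emph{belonging to $U$}, so to rule out a shattered pair $\{w,w'\}$ one only needs to rule out witnesses inside $U$, not in all of $H(d,q)$. You assume the opposite (``with no constraint that $x\in U$''), and this single misstep derails the whole proposal. In particular, you actually write down the paper's construction --- the graph of a checksum function, $U=\{(x_1,\dots,x_{d-1},f(x_1,\dots,x_{d-1}))\}$; the paper takes $f=\sum_j x_j \bmod q$ --- and then discard it on the grounds that pairs at Hamming distance $2$ still have common neighbors in the ambient graph. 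Under the correct definition that objection is irrelevant: this $U$ is an independent set (two of its points can never differ in exactly one coordinate, since fixing all but one coordinate of a point of $U$ determines the remaining one), so $n(u)\cap U=\varnothing$ for every $u\in U$, no nonempty subset of any $W\subseteq U$ is ever realized, and no pair can be shattered. That one-line observation is the entire proof in the paper. Your sphere-packing remark (that pairwise distance $\geq 3$ would be needed, forcing $|U|$ too small) is a correct observation about the wrong problem.

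The construction you ultimately commit to, the hyperplane slice $U=\{0\}\times\mathbb Z_q^{d-1}$, does not work for $d\geq 3$ and $q\geq 3$ even under the correct definition, because all the needed witnesses lie inside $U$ itself: for $w=(0,\dots,0,0,0)$ and $w'=(0,\dots,0,1,1)$, the points $(0,\dots,0,0,1)$, $(0,\dots,0,0,2)$, $(0,\dots,0,1,2)$, and $(0,\dots,0,2,2)$ of $U$ realize $\{w,w'\}$, $\{w\}$, $\{w'\}$, and $\varnothing$ respectively, so this $U$ has VC-dimension at least $2$. The ``short case analysis'' you defer to would therefore not close; you would discover the counterexample rather than a proof. (As a minor aside, one can quibble with the paper's claim that the independent-set construction has VC-dimension exactly $1$ rather than $0$, since no singleton of $U$ is realized by $n(U)$ either; but the substance of the proposition is the upper bound, which the paper's construction delivers and yours does not.)
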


We then show that Corollary \ref{phpCor} is sharp. That is, we have completely characterized which subsets of the vertices of $H(d,q)$ have VC-dimension 3 in terms of size.

\begin{proposition}\label{3sharp4+}
    Given natural numbers $d, q\geq 2,$ there exists a subset $U$ of the vertices of $H(3,q)$ of size $|U|=3q^{d-1}$ so that the VC-dimension of $(U,n(U))$ is at most 2.
\end{proposition}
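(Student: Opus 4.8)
The plan is to exhibit $U$ explicitly and then to verify that no three vertices of $U$ can be shattered, which is exactly the statement that the VC-dimension of $(U,n(U))$ is at most $2$. For the construction I would take $U$ to be a set meeting every \emph{line} of $H(d,q)$ — by a line I mean a set of $q$ vertices that agree in all but one coordinate — in exactly three vertices. Counting pairs $(v,L)$ with $v\in U$ a vertex on the line $L$ in two ways (each $v$ lies on $d$ lines; each line carries three points of $U$) forces $|U|=3q^{d-1}$. Such sets are easy to come by: the level set $\{x\in\mathbb Z_q^d:\ x_1+\cdots+x_d\in\{0,1,2\}\bmod q\}$ meets every line in three points. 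As the argument below shows, however, once $d\ge 3$ the level set is not good enough and a more careful choice is needed.

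For the verification, suppose $W=\{x,y,z\}\subseteq U$ is shattered. I would first record the elementary description of common neighbourhoods in $H(d,q)$: two vertices at Hamming distance $1$ have as common neighbours exactly the other $q-2$ vertices of the line they span; two vertices at distance $2$ have exactly two common neighbours, the two remaining corners of the $2\times 2$ box they determine in the coordinates where they differ; and two vertices at distance $\ge 3$ have none. Since $W$ is shattered there is some $u\in U$ adjacent to all of $x,y,z$ (this realises $W$ itself), and writing $x-u,\,y-u,\,z-u$ as nonzero multiples of standard basis vectors $e_i,e_j,e_k$ gives $W=\{u+ae_i,\,u+be_j,\,u+ce_k\}$. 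If $i=j=k$, then $x,y,z$ lie on a common line $L$, and $|L\cap U|=3$ means $L\cap U=W$; so $L$ has no fourth vertex in $U$, nothing in $U$ is adjacent to all three, and $W$ cannot be realised — a contradiction. If exactly two of $i,j,k$ coincide, say $i=j\ne k$, then $x$ and $y$ lie on a line $L$, $z$ does not, and $u$ is the \emph{only} vertex of $H(d,q)$ adjacent to all of $x,y,z$; so either $u\in U$, which forces $L\cap U=\{x,y,u\}$, making $u$ the sole common neighbour of $x,y$ in $U$ — and it is adjacent to $z$, so $\{x,y\}$ cannot be realised; or $u\notin U$, in which case $W$ cannot be realised. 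Either way we contradict shattering.

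The only substantive case is $i,j,k$ pairwise distinct (which does not arise when $d=2$). Here $u$ is again the unique vertex adjacent to all of $x,y,z$, so $u\in U$; and the common neighbours of the distance-$2$ pair $x,y$ are $u$ and $u+ae_i+be_j$, of which only the second is non-adjacent to $z$, so realising $\{x,y\}$ forces $u+ae_i+be_j\in U$, and symmetrically $u+ae_i+ce_k\in U$ and $u+be_j+ce_k\in U$. Thus $U$ would contain seven of the eight vertices of the combinatorial box spanned by $ae_i,be_j,ce_k$ — namely $u$, $x$, $y$, $z$ and the three weight-$2$ corners — the only one possibly missing being the opposite corner $u+ae_i+be_j+ce_k$. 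The whole point of the construction is therefore to choose a line-meeting set $U$ containing no such seven-out-of-eight box. This is exactly where the level set fails: with $u=0$ and $a=b=c=1$ all seven low-weight vertices lie in it, and one checks that the corresponding $W$ really is shattered (the remaining subsets are realised by the third vertex of a line through one of $x,y,z$ in the singleton cases, and by any vertex of $U$ outside the $O(dq)$-vertex neighbourhood of $W$ for the empty set). I would instead build $U$ from a function $\mathbb Z_q^{d-1}\to\mathbb Z_q$ whose restriction to each axis-parallel line is a bijection — a ``Latin'' rather than a linear rule — and check, by a short case analysis according to which coordinate directions the box uses (in particular whether it involves the distinguished coordinate), that no seven-out-of-eight box occurs. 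Getting that last verification to go through, for suitable $q$, is the main obstacle I anticipate.
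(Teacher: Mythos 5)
Your verification lemma is correct and is a legitimate refinement of what the paper does: the paper's Lemma \ref{rectangles} shows that VC-dimension $3$ forces either four collinear points or a \emph{rectangle} (four points on a $2$-dimensional combinatorial box), whereas your case analysis, under the stronger standing hypothesis that $U$ meets every line in exactly three points, pins down the obstruction as seven of the eight corners of a $2\times2\times2$ box. Since seven of eight corners always contain a complete rectangular face, your forbidden configuration is implied by the paper's, so either reduction would suffice. Your distance-$1$/distance-$2$ common-neighbourhood computations, the uniqueness of $u$, and the observation that the naive level set $x_1+\cdots+x_d\in\{0,1,2\}$ genuinely fails (and is in fact shattered) are all accurate.

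The genuine gap is that the proposition is an existence statement and you never exhibit a witness: you end by naming the construction of a line-transversal set with no seven-of-eight box as ``the main obstacle I anticipate.'' That obstacle is precisely the content of the paper's proof. The paper takes
\[
U''_d(q)=\Bigl\{(x_1,\dots,x_d):\ x_d=\epsilon+\textstyle\sum_{j=1}^{d-1}x_j,\ \epsilon\in\{-1,0,2\}\Bigr\},
\]
i.e.\ three parallel ``diagonal'' graphs of a linear Latin rule, offset by the set $\{-1,0,2\}$. The whole point of choosing $\{-1,0,2\}$ rather than $\{0,1,2\}$ is arithmetic: a rectangle forces relations of the form $\epsilon_x\neq\epsilon_y$, $\epsilon_y\neq\epsilon_z$, $\epsilon_z\neq\epsilon_w$, $\epsilon_w\neq\epsilon_x$ and $\epsilon_x+\epsilon_z=\epsilon_y+\epsilon_w$, which are jointly unsatisfiable in a three-element set $E$ with no solution to $2a=b+c$ ($a,b,c\in E$ distinct) modulo $q$ --- this is where $q\geq 6$ enters, and why an arithmetic progression such as $\{0,1,2\}$ fails. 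So the fix for your approach is not a more exotic (nonlinear) Latin function, but a non-arithmetic-progression choice of the three offsets; as written, your proof establishes the conditional statement ``such a $U$ works if it exists'' but not the proposition itself. (Separately, note that the statement as printed carries typos --- $H(3,q)$ should read $H(d,q)$, and the paper's actual construction requires $d\geq 3$ and $q\geq 6$ --- but that is an issue with the paper, not with your argument.)
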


Finally, we offer some initial results about subsets of $H(d,q,t)$ where $t\neq 1.$ In particular, we prove the following tight result in the case that $t=d=2.$

\begin{theorem}\label{thm222}
    Given a natural number $q\geq 3,$ and subset $U$ of the vertices of $H(2,q,2)$ of size $|U|\geq 2q$, the VC-dimension of $(U,n(U))$ is at least 2. Moreover, this bound is tight.
\end{theorem}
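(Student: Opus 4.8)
The plan is to work directly with the grid structure. Writing vertices of $H(2,q,2)$ as pairs $(a,b)\in\mathbb Z_q^2$, two vertices are adjacent precisely when they differ in both coordinates, so $n((a,b))$ is the set of vertices lying neither in the row $\{(a,j):j\in\mathbb Z_q\}$ nor in the column $\{(i,b):i\in\mathbb Z_q\}$; that is, a neighborhood is the $q\times q$ grid with one ``cross'' removed. The first step is to record, for an arbitrary pair $\{x,y\}\subseteq U$, which vertices of $U$ can witness each of the four subsets $\emptyset,\{x\},\{y\},\{x,y\}$. If $x=(a,u)$ and $y=(a,v)$ lie in a common row, then $x$ itself witnesses $\emptyset$; the set $\{x\}$ admits a witness iff $U$ contains a vertex in column $v$ off row $a$; $\{y\}$ admits one iff $U$ contains a vertex in column $u$ off row $a$; and $\{x,y\}$ admits one iff $U$ contains a vertex off row $a$ and off both columns $u,v$. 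An identical statement with rows and columns interchanged holds for a pair in a common column, while for an adjacent pair the only obstructions are that one of the two ``corner'' vertices $(x_1,y_2),(y_1,x_2)$ must lie in $U$ and some vertex of $U$ must lie off both rows and off both columns.

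Next I would locate a convenient pair by pigeonhole. Let $C_2$ be the set of columns containing at least two points of $U$. Some row $a$ must contain points in two distinct columns of $C_2$: otherwise the number of points of $U$ in columns of $C_2$ is at most $q$, while the $q-|C_2|$ other columns hold at most $q-|C_2|$ points, forcing that number to be at least $2q-(q-|C_2|)=q+|C_2|$, hence $C_2=\emptyset$, contradicting $|U|\ge 2q>q$. Fix such a row $a$ and columns $u,v\in C_2$ with $(a,u),(a,v)\in U$. By the witness criterion, $\{(a,u),(a,v)\}$ is shattered unless $U$ is contained in the union of row $a$, column $u$, and column $v$. The same argument with rows and columns interchanged produces a column $b$ meeting two distinct rows $p\neq p'$ each containing at least two points of $U$, and $\{(p,b),(p',b)\}$ is shattered unless $U$ lies in the union of column $b$, row $p$, and row $p'$.

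The crux is the case where $U$ is simultaneously confined to both three-line unions, and I expect this to be the main obstacle. Here I would intersect the two regions. A short case analysis on whether $b\in\{u,v\}$ and whether $a\in\{p,p'\}$ shows that in every configuration except one the intersection contains fewer than $2q$ grid positions (the bounds that come up are $5$, $q+2$, and $2q$, and all three estimates use $q\ge 3$), contradicting $|U|\ge 2q$. In the one surviving configuration, say $b=u$ and $a=p$, the intersection has exactly $2q$ positions, so $U$ is forced to equal (the full row $a$) $\cup$ (the full column $u$) $\cup$ $\{(p',v)\}$ with $p'\neq a$ and $v\neq u$. For this explicit set one shatters the common-row pair $\{(p',u),(p',v)\}$ directly: the full column $u$ and full row $a$ supply the $\{y\}$- and $\{x\}$-witnesses, the full row $a$ contains a point $(a,w)$ with $w\notin\{u,v\}$ (here $q\ge 3$ is used) witnessing $\{x,y\}$, and $(p',u)$ witnesses $\emptyset$.

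For tightness, take $U$ to be the union of one full row and one full column, so that $|U|=2q-1$; it contains an edge and hence has VC-dimension at least $1$. To see it has VC-dimension at most $1$, test an arbitrary pair $\{x,y\}\subseteq U$ against the witness criteria: if $x$ and $y$ lie in the common row (or column) of $U$, then either the relevant column (row) meets $U$ in only one point, killing the $\{x\}$- or $\{y\}$-witness, or there is no point of $U$ off that line and off both transverse lines, killing the $\{x,y\}$-witness; and if $x$ comes from the row while $y$ comes from the column, the pair is adjacent but $U$ contains no vertex off both rows and off both columns, so the $\{x,y\}$-witness is again absent. Hence no pair is shattered. I do not expect any single ingredient to be difficult once the witness criteria are established; the only delicate part is the bookkeeping in the doubly confined case and verifying that $q\ge 3$ is exactly what the counting requires.
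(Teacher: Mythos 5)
Your proof is correct, and the tightness half is literally the paper's construction: a full row union a full column is exactly the set of points with a zero coordinate used in Proposition \ref{2q2tight}, and your witness criteria for a common-row pair are precisely the paper's row-pluck configuration plus an unrelated point (Lemma \ref{lemmaRich}) --- the pivots' populated columns supply the singleton witnesses and the unrelated point supplies the $\{x,y\}$-witness. Where you genuinely diverge is in how that configuration is located for the lower bound. The paper proves Proposition \ref{prop2q2} by induction on $q$, deleting a row--column pair to drop into $H(2,q-1,2)$ and handling separately the case where no suitable pair exists. You instead argue globally and without induction: a pigeonhole on the columns with at least two points yields a row meeting two such columns, the transposed argument yields a column meeting two rich rows, and then either one of the two candidate pairs is already shattered or $U$ is trapped in the intersection of two three-line unions, which you correctly bound by $5$, $q+2$, or $2q$ cells according to how many of the lines coincide; the first two bounds contradict $|U|\ge 2q$ when $q\ge 3$, and the last forces $U$ to be an explicit $2q$-point set in which you exhibit a shattered pair by hand. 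Your route buys a shorter, induction-free argument in which the role of $q\ge 3$ is transparent (it is exactly what makes $q+2<2q$), at the price of one extremal configuration that must be checked separately; the paper's induction is more mechanical but its reduction step carries more bookkeeping. Both arguments are sound, so this stands as a valid alternative proof of Theorem \ref{thm222}.
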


In the next section, we prove the main results about when subsets of $H(2,q)$ have VC-dimension at least two. The following section does the same with VC-dimension three. After that, we produce some constructions to demonstrate how tight or loose our various bounds are. The final section contains some preliminary results on the behavior of $H(d,q,t)$ where $t\neq 1.$

\section{VC-dimension two}
We begin by proving Proposition \ref{prop22}, which holds for $d\geq 2$, then restrict to $d=2$ to prove Theorem \ref{thm22odd}. Here and below, we say `line' to mean a rectilinear line (parallel to a basis vector). So a plane is a set of points where exactly one coordinate takes all possible values, and the rest remain fixed. For ease of exposition, we occasionally say `horizontal' or `row' to mean parallel with the first basis vector, and `vertical' or `column' to mean parallel with the second basis vector. Similarly, we will use the term `plane' to refer to rectilinear planes, or a set of points with exactly two coordinates taking all possible values, and the rest remaining fixed. Moreover, as the vertices of a given Hamming graph may be viewed as points with relationships organized by sets of lines, we will often conflate the terms `point' and `vertex' depending on context. To keep track of vertices using coordinates, we often identify the vertex set of $H(d,q)$ with $\mathbb Z_q^d.$
\subsection{Arguments for $d\geq 2$}
The general strategy will be to look for a set of three points on the same vertical or horizontal line, as well as a point off of that line, all within our subset $U$. We prove that a subset $U$ of the vertices of $H(2,q)$ with this configuration will guarantee that $(U,n(U))$ has VC-dimension at least two.
\subsubsection{Three points on a line}
\begin{lemma}\label{lemma22}
    Given a natural number $q\geq 3,$ and subset $U$ of the vertices of $H(2,q)$ if $U$ contains three points on the same line $L$, as well as a fourth point not on $L,$ then the VC-dimension of $(U,n(U))$ is at least 2.
\end{lemma}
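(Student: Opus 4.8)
The plan is to exhibit explicit neighborhoods of vertices in $U$ that realize all four subsets of a cleverly chosen $2$-element shattering set $W$. Label the three collinear points of $U$ lying on a line $L$; without loss of generality $L$ is horizontal, say $L = \{(0,b), (1,b), (2,b), \dots\}$ contains $P_1 = (x_1,b)$, $P_2 = (x_2,b)$, $P_3 = (x_3,b)$ with distinct first coordinates, and let $Q = (x_Q, y_Q) \in U$ with $y_Q \neq b$ be the fourth point off $L$. I would take the candidate shattering set to be $W = \{P_1, P_2\}$ (two of the three collinear points). The key observation is that in $H(2,q)$, the neighborhood $n(v)$ of a vertex $v=(a,c)$ consists of the full row $c$ and the full column $a$, minus $v$ itself; so whether a vertex $P_i = (x_i, b)$ lies in $n(v)$ is governed purely by the two conditions ``$v$ is in row $b$'' or ``$v$ is in column $x_i$.''

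The four subsets of $W$ are then realized as follows. For $S = \{P_1, P_2\}$: any vertex in row $b$ other than $P_1, P_2$ itself sees both — here is where the \emph{third} collinear point $P_3$ is used, since $P_3 \in U$, lies in row $b$, and is distinct from $P_1$ and $P_2$, so $n(P_3) \supseteq \{P_1, P_2\}$ and $n(P_3) \cap W = \{P_1,P_2\}$. For $S = \{P_1\}$: I need a vertex of $U$ seeing $P_1$ but not $P_2$; the point $Q$ is the natural candidate, but only if $Q$ lies in column $x_1$ and not in column $x_2$ (it is automatically not in row $b$). Symmetrically for $S = \{P_2\}$, I would use $Q$ if it sits in column $x_2$. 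For $S = \emptyset$: a vertex of $U$ in neither row $b$ nor column $x_1$ nor column $x_2$ — again $Q$ is a candidate provided its column avoids both $x_1$ and $x_2$.

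The main obstacle is that a single off-line point $Q$ cannot simultaneously realize the three ``small'' subsets $\{P_1\}$, $\{P_2\}$, $\emptyset$, since $Q$ sits in exactly one column and so sees at most one of $P_1, P_2$ (or neither). The resolution is to exploit the freedom in choosing \emph{which} two of the three collinear points form $W$, and to look for the missing witnesses among $P_1, P_2, P_3, Q$ and vertices forced into $U$ by their presence. Concretely: whatever column $Q$ lies in, at most one of the three values $x_1, x_2, x_3$ equals it, so at least two of the $P_i$ have first coordinate different from $x_Q$; relabel so that $x_Q \notin \{x_1, x_2\}$ and set $W = \{P_1, P_2\}$. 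Then $n(Q) \cap W = \emptyset$ handles $S=\emptyset$, $n(P_3) \cap W = \{P_1, P_2\}$ handles the full set, and for the singletons I use $P_2$ itself (which lies in column $x_2$ but not row-excluded, so $n(P_2) \ni P_1$ when... ) — more carefully, $n(P_2) \cap W$: since $P_2 \in$ row $b$, $n(P_2)$ contains $P_1$, and $P_2 \notin n(P_2)$, so $n(P_2) \cap W = \{P_1\}$; symmetrically $n(P_1) \cap W = \{P_2\}$. Thus all four subsets are realized by $n(P_1), n(P_2), n(P_3), n(Q)$ and the VC-dimension is at least $2$. I would double-check the degenerate cases $q=3$ and the situation where $Q$ shares a row with some $P_i$ but not row $b$ (impossible, since the $P_i$ are all in row $b$), and confirm that $q \ge 3$ guarantees the third point $P_3$ genuinely exists and is distinct, which is exactly the hypothesis. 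The only subtlety worth writing out is verifying that nothing collapses when $q$ is small, but since we only need distinctness of $x_1, x_2, x_3$ and $x_Q \ne x_1, x_2$, everything goes through for all $q \ge 3$.
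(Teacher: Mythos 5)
Your proposal is correct and follows essentially the same route as the paper: choose the two collinear points not sharing a column with the off-line point $Q$ as the shattering set $W$, use the third collinear point for the full subset, the points of $W$ themselves for the singletons, and $Q$ for the empty set. The only difference is cosmetic --- the paper phrases the relabeling step as ``at most one of $x,y,z$ can lie on a line with $u_0$,'' which is exactly your column-counting observation.
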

\begin{proof}
    To see that this suffices, suppose the points $x,y,$ and $z$ lie on a line $L$, and $u_0$ is a point not on $L$. At most one of $x,y,$ or $z$ could also be on a line $L'\neq L$ with $u_0.$ Without loss of generality, suppose $x$ and $y$ do not lie on $L'.$ Now we set about showing that the set $W:=\{x,y\}$ is shattered by the neighborhoods of $u_0,x,y,$ and $z.$ Specifically, notice that $z$ differs by both $x$ and $y$ in exactly one coordinate, so it is adjacent to both $x$ and $y.$ Therefore $n(z),$ the neighborhood of $z,$ intersects with $W$ in both points. Similarly, $n(x)\cap W = \{y\},$ and $n(y)\cap W = \{x\}.$ Finally, $n(w)\cap W=\varnothing,$ because $w$ is not on a line with either of $x$ or $y.$
\end{proof}

\begin{figure}
\centering
\includegraphics[scale=1.2]{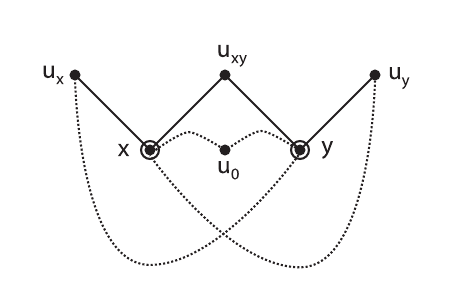}
\caption{The shattering set is $W=\{x,y\}.$ When two vertices are adjacent, an edge is indicated with a solid line. When two vertices may not be adjacent, this is indicated with a dotted curve. In the proof of Lemma \ref{lemma22}, $z$ served as $u{xy}.$}
\label{figure4}
\end{figure}

\subsubsection{Proof of Proposition \ref{prop22}} Notice that by the fact that there are exactly $q^{d-1}$ horizontal lines, the size condition $|U|\geq 2q^{d-1}+1,$ and the pigeonhole principle, there must be a horizontal line $L$ with at least three points. No horizontal line can have more than $q$ points on it, so we must have at least one point off of any given horizontal line, including $L.$ Therefore we have the required configuration of points to apply Lemma \ref{lemma22}, and we are done.
\subsection{Proof of Theorem \ref{thm22odd}}
We now focus our attention on results where the ambient dimension is $d=2.$ Theorem \ref{thm22odd} will follow by combining the following lemmas.
\subsubsection{Tightness}
We prove the sharpness of Proposition \ref{prop22} in general by construction for even $q$. The basic idea is to pack in $2\times 2$ boxes of points so that there are $2q$ points in total, but the VC-dimension of the subset is 1. We then provide a modified construction for the case that $q$ is odd.
\begin{lemma}\label{vc2tight}
Given a natural number $q,$ there exists a subset $U_1(q)$ of the vertices of $H(2,q)$ of size $|U_1(Q)|=2q-i,$ where $i\equiv q$ mod 2, so that the VC-dimension of $(U_1(q),n(U_1(q)))$ is 1.
\end{lemma}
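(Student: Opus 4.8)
The plan is to give an explicit construction built from disjoint $2\times2$ blocks placed along the main diagonal and then read off the neighborhood structure directly. Write $q=2m$ when $q$ is even and $q=2m+1$ when $q$ is odd, and for $k=0,1,\dots,m-1$ put $B_k:=\{2k,2k+1\}\times\{2k,2k+1\}\subseteq\mathbb Z_q^2$. Define $U_1(q):=\bigcup_{k=0}^{m-1}B_k$ when $q$ is even, and $U_1(q):=\big(\bigcup_{k=0}^{m-1}B_k\big)\cup\{(2m,2m)\}$ when $q$ is odd; in the odd case the extra ``lone'' vertex sits on the otherwise unused last row and column. The blocks occupy pairwise disjoint sets of rows (and of columns), so they are disjoint, and hence $|U_1(q)|=4m=2q$ in the even case and $|U_1(q)|=4m+1=2q-1$ in the odd case; in both cases this is $2q-i$ with $i\equiv q\pmod 2$.

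The heart of the proof is to compute the trace $n(u)\cap U_1(q)$ for every $u\in U_1(q)$. First record two elementary facts: (i) a vertex of $B_k$ and a vertex of $B_{k'}$ with $k\neq k'$ differ in both coordinates, so distinct blocks are mutually non-adjacent; and (ii) the lone vertex $(2m,2m)$ (odd case) differs in both coordinates from every block vertex, so it is non-adjacent to all of them and $n((2m,2m))\cap U_1(q)=\varnothing$. Consequently, for a block vertex $u\in B_k$ the trace is computed entirely inside $B_k$. Writing the two diagonals of $B_k$ as $D_k:=\{(2k,2k),(2k+1,2k+1)\}$ and $D_k':=\{(2k,2k+1),(2k+1,2k)\}$, a direct check (each vertex of $B_k$ is adjacent to its row-partner and its column-partner in $B_k$, but not to its diagonal-partner) gives $n(u)\cap U_1(q)=D_k'$ when $u\in D_k$ and $n(u)\cap U_1(q)=D_k$ when $u\in D_k'$. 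Therefore the family of traces is
\[
\{\,n(u)\cap U_1(q):u\in U_1(q)\,\}=\{\,D_k,D_k':0\le k\le m-1\,\}\cup\big(\{\varnothing\}\text{ if }q\text{ is odd}\big),
\]
a collection of sets each of size at most $2$ in which the two sets arising from a single $B_k$ are disjoint.

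It remains to conclude that the VC-dimension of $(U_1(q),n(U_1(q)))$ equals $1$. For the lower bound, the singleton $\{(0,0)\}$ is shattered: $n((0,1))\cap U_1(q)=D_0$ produces the trace $\{(0,0)\}$, while $n((0,0))\cap U_1(q)=D_0'$ produces $\varnothing$. For the upper bound, suppose a pair $W=\{x,y\}\subseteq U_1(q)$ were shattered. Realizing the trace $W$ itself forces $\{x,y\}$ to be one of the listed traces, hence $\{x,y\}=D_k$ or $\{x,y\}=D_k'$ for a single index $k$. But then for every $v\in U_1(q)$ the trace $n(v)\cap U_1(q)$ either equals $\{x,y\}$ (when it is that same diagonal) or is disjoint from $\{x,y\}$ (when it is the other diagonal of $B_k$, a diagonal of a different block, or $\varnothing$); it is never a singleton. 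So neither $\{x\}$ nor $\{y\}$ can be realized, contradicting that $W$ is shattered. Hence the VC-dimension is exactly $1$.

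The only real bookkeeping is the case analysis behind facts (i) and (ii) and the adjacency computation inside a single block; once the ``block-diagonal'' description of the trace family is established, the VC-dimension computation is immediate, so I do not anticipate a genuine obstacle. If one wishes to avoid the parity split, one may phrase the construction uniformly as $\lfloor q/2\rfloor$ blocks together with $q\bmod 2$ extra lone vertices, and the same argument applies verbatim.
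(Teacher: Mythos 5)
Your proposal is correct and uses essentially the same construction and argument as the paper: $2\times 2$ blocks placed along the diagonal (so distinct blocks share no coordinate values and are mutually non-adjacent), with a lone corner vertex added when $q$ is odd. Your explicit computation of the trace family as the diagonals $D_k, D_k'$ is a slightly cleaner packaging of the paper's case analysis on pairs, but the underlying idea is identical.
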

\begin{proof}
To see this, define the set
\[B:=\{(0,0),(0,1),(1,0),(1,1)\}\]
and consider
\[U_1(q):=\bigcup_{i=0}^{q/2}\left(B+(2i,2i)\right),\]
where the addition denotes the standard Minkowski sum, or componentwise addition. We now show that any subset $W\subset U_1(q)$ of size 2 cannot be shattered. Pick any pair of points $\{x,y\}$ from $U_1(q).$ Either they are in the same translate of $B$ or not. If they are not in the same translate, then they cannot both be adjacent to any point in $U_1(q).$ Therefore, they must be in the same translate of $B$ if we are to have any hope of shattering them. If they are in the same translate of $B,$ then either they are adjacent or not. If they are adjacent, notice that there is no point from $U_1(q)$ adjacent to both of them. If they are not adjacent, then there is no point from $U_1(q)$ adjacent to exactly one of them. Therefore, the VC-dimension of $U_1(q)$ is strictly smaller than 2, and Proposition \ref{prop22} is sharp in general. Moreover, it is clear that any point $x\in U_1(q)$ is adjacent to some point $y\in U_1(q)$, and not adjacent to some other point $z\in U_1(q),$ so the VC-dimension is 1.

We can modify the construction above to show that the case that $q$ is odd is sharp when $|U|=2q-1$ by starting with the construction for $U_1(q-1)\subseteq \mathbb Z_{q-1}\times \mathbb Z_{q-1}.$ Notice that $|U_1(q-1)|=2q-2.$ Now we can consider this as a subset of $\mathbb Z_q\times \mathbb Z_q,$ and add in the point $(q-1,q-1)$ for a total of $2q-1$ points. More precisely, for $q$ odd, we define
\[U_1(q):=U_1(q-1)\cup\{(q-1,q-1)\}.\]
Arguing as we did in the case that $q$ was even (except that we cannot choose $x=(q-1,q-1)$), we see that this set also has VC-dimension 1.
\end{proof}
\subsubsection{The lower bound for odd $q$}
\begin{lemma}\label{nextOne}
    Given an odd number $q\geq 3,$ and a subset $U$ of the vertices of $H(2,q)$ of size $|U|\geq 2q,$ the VC-dimension of $(U,n(U))$ is at least 2.
\end{lemma}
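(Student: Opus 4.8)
The plan is to reduce to Lemma \ref{lemma22} by showing that any $U$ with $|U| \geq 2q$ must either contain three points on a common line together with a fourth point off it, or else fail to do so only in a very rigid way that can be ruled out by the parity of $q$. First I would set up the counting framework: for each horizontal line $L_j$ (indexed by its fixed second coordinate $j \in \mathbb{Z}_q$) let $r_j = |U \cap L_j|$ be the number of points of $U$ in that row, and similarly let $c_k$ be the number in column $k$. We have $\sum_j r_j = \sum_k c_k = |U| \geq 2q$. If some $r_j \geq 3$, then since no row has more than $q < |U|$ points there is automatically a point of $U$ off $L_j$, and Lemma \ref{lemma22} applies; the same holds for columns. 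So the only case to handle is when every row and every column contains at most $2$ points of $U$.

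In that remaining case, $\sum_j r_j \geq 2q$ forces $r_j = 2$ for every $j$ and likewise $c_k = 2$ for every $k$, so $|U| = 2q$ exactly and $U$ meets every row and every column in precisely two points. Here is where I would use oddness of $q$. Think of $U$ as a $2$-regular bipartite multigraph — wait, rather: consider the bipartite graph $\Gamma$ whose two vertex classes are the $q$ rows and the $q$ columns, with an edge for each point $(k,j) \in U$ joining row $j$ to column $k$. Then $\Gamma$ is $2$-regular, hence a disjoint union of cycles, and the total number of edges is $2q$. Each cycle has even length (bipartiteness), so each cycle uses an even number of rows and an even number of columns. But the rows are partitioned among the cycles into even-sized parts summing to $q$, which is odd — contradiction. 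Therefore the "every row and column has exactly two points" configuration is impossible when $q$ is odd, and we are always in the case covered by Lemma \ref{lemma22}.

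The main obstacle I anticipate is making sure the reduction to Lemma \ref{lemma22} is airtight: Lemma \ref{lemma22} requires three \emph{distinct} points on a line and a fourth point genuinely not on that line, and one must confirm that a row with $r_j \geq 3$ automatically supplies the off-line point (it does, since $|U| \geq 2q > q \geq r_j$, so $U \not\subseteq L_j$). A secondary subtlety is the parity argument: one must be careful that $\Gamma$ could a priori have multiple edges between the same row and column — but two distinct points of $U$ cannot share both coordinates, so $\Gamma$ is in fact a simple graph, and a $2$-regular simple graph is a disjoint union of cycles of length $\geq 3$; combined with bipartiteness these have even length, and the even-partition-of-an-odd-number contradiction goes through. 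I would also remark that this is exactly why the even case is genuinely different: when $q$ is even one can tile with $2 \times 2$ blocks (as in Lemma \ref{vc2tight}), realizing the forbidden configuration and keeping VC-dimension $1$, so the bound $2q$ is special to odd $q$ and $2q^{d-1}+1$ is the right bound in general.
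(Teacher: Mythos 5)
Your reduction to the case where every row and every column contains exactly two points of $U$ is correct and matches the paper's first step, but the parity argument you then use to rule that case out is wrong. In the bipartite incidence graph $\Gamma$ (rows and columns as vertices, points of $U$ as edges), a $2$-regular simple bipartite graph does decompose into cycles of even \emph{length} $2m$ with $m\geq 2$; but such a cycle uses $m$ rows and $m$ columns, and $m$ need not be even. So the rows are partitioned into parts of sizes $m_1,m_2,\dots$ with $\sum_i m_i=q$, and there is no contradiction with $q$ odd. A concrete counterexample to your impossibility claim: for $q=3$, the set $U=\{(0,0),(0,1),(1,1),(1,2),(2,2),(2,0)\}$ has exactly two points in every row and every column (it is a single $6$-cycle in $\Gamma$). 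Hence the case you try to dismiss genuinely occurs, and since your proof produces no shattered pair in that case, it does not establish the lemma.

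The correct use of parity here is weaker: if every cycle of $\Gamma$ had length $4$, then $|U|=2q$ would be divisible by $4$, which fails for odd $q$; so some cycle has length at least $6$, which yields a \emph{corner} --- three points $(a,d),(b,c),(b,d)\in U$ forming three vertices of a rectangle whose fourth vertex $(a,c)$ is not in $U$. (This is exactly how the paper argues, via a greedy elimination of complete $4$-cycles.) One must then still exhibit a shattered pair: take $W=\{(a,d),(b,c)\}$; the point $(b,d)$ is adjacent to both; the unique second point of $U$ sharing first coordinate $a$ is not $(a,c)$ and so is adjacent to $(a,d)$ only; symmetrically for $(b,c)$; and since each point of $W$ has exactly two neighbors in $U$, only five points have been used, so $|U|=2q\geq 6$ leaves a point adjacent to neither. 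Your proposal omits this entire construction because the false impossibility claim makes it seem unnecessary, so as written the argument has a fatal gap. (Your closing remark about the even case is fine but irrelevant to the gap: $2\times 2$ blocks show the $4$-cycle-only decomposition is achievable for even $q$, whereas for odd $q$ longer cycles are forced --- not that the two-per-line configuration disappears.)
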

\begin{proof}
    Since $q$ is odd, we can write it as $q=2p+1$ for some natural number $p$. Since Proposition \ref{prop22} already dealt with the case that $|U|\geq 2q+1,$ we can turn our attention to the case $|U|=2q.$ Suppose that we are given some subset $U$ of the vertices of $H(2,q)$ with $|U|= 2q.$ Now, we are done if we can find three points on a line, because by the size condition alone, no line can have more than $q$ points, so there must always be a fourth point off of any line with at least three, and we would then be finished by Lemma \ref{lemma22}. So the remainder of this proof will be in showing that if $U$ does not have three points on any line, then it still must have a set of size two that is shattered by $n(U).$

    Suppose that $U$ does not have three points on any line. By the size assumption, $|U|=2q,$ and the fact that there are exactly $q$ vertical lines, we see that there must be exactly two points on each vertical line. Similarly, every horizontal line must have exactly two points. Next, we will search for a different type of point configuration that will guarantee the existence of a shattering set in this particular case. Specifically, for some point $(a,c)$ that is not in $U,$ we claim that we can find a triple of points of the following form, which we call the {\it corner} of $(a,c)$:
    \[\{(a,d),(b,c),(b,d)\}.\]
    These are three points from $U$ that are the vertices of a rectangle whose fourth point is not in $U.$ To see that there must be a corner for some point $(a,c)\notin U,$ we use a greedy algorithm, processing points in $U$ until we find what we need. Notice that since every line has exactly two points, every point $(i,j)\in U$ is on the intersection of two lines that have other points of $U$, say $(i,j')$ and $(i',j),$ for some $i'\neq i,$ and $j'\neq j.$ Now, if $(i',j')\notin U,$ we are done. If $(i',j')\in U,$ we ignore that quadruple of points and the lines they populated and continue. We are safe in ignoring those lines, as there are exactly two points on each line, so that quadruple of points accounts for all points on each of the four lines in question (two vertical and two horizontal). We may be able to throw out quadruples like this for a while\footnote{Notice that in the case that $q$ was even, such quadruples could be exhaustive. This was precisely the case for each of the shifts of $B$ in the construction of the set $U_1(q)$ when $q$ was even.}, but the process of eliminating quadruples must eventually terminate with some points left over, because $|U|=2q=2(2p+1)=4p+2,$ which is not a multiple of 4. Therefore, there is some triple of points $(a,d),(b,c),(b,d)\in U$ that form a corner for a point $(a,c)\notin U.$

    To finish, we show that these will guarantee the existence of a two-element subset $W\subseteq U$ that can be shattered by $n(U).$ Specifically, choose
    \[W=\{(a,d),(b,c)\}.\]
    First we see that $(b,d)$ is adjacent to both. Next, notice that $(a,d)$ is on exactly two lines with exactly two points each. We already know that $(b,d)$ is on one of those lines, but we also know that the other point from $U$ on the other line cannot be $(a,d),$ as $(b,d)\notin U.$ So there must be some point $(a',d)\in U$ with $a'\neq a$ and $a'\neq b.$ That is, $(a',d)$ is adjacent to one of our points in $W$ but not the other. Similarly, there must be a point $(b,c')\in U$ with $c'\neq c$ and $c'\neq d.$ So the point $(b,c')$ is also adjacent to exactly one point in $W,$ but not the same point that $(a',d)$ was adjacent to. Finally, we know that each of the points in $W$ is adjacent to exactly two points, and we have explicitly dealt with five points total. However, since $q\geq 3,$ we know that $|U|=2q>5,$ meaning there must be some other point adjacent to neither of the points in $W,$ and we are done.
\end{proof}
\section{VC-dimension three}
Theorem \ref{thm23} will follow by combining the lemmas given here.
\subsection{Tightness}
\begin{lemma}\label{lemma23}
    Given a natural number $q\geq4$, and a subset of $U$ of the vertices of $H(2,q)$ if the VC-dimension of $(U,n(U))$ is 3, then there must be at least one row or column with four points on the same vertical or horizontal line $L$.
\end{lemma}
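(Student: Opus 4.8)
The plan is to extract the required line directly from the witnesses of a shattered triple, without classifying the possible shapes of that triple. Suppose the VC-dimension of $(U,n(U))$ is $3$ and let $W=\{x,y,z\}\subseteq U$ be a shattered set, so for each $S\subseteq W$ there is a vertex $u_S\in U$ with $n(u_S)\cap W=S$. Since a vertex is never adjacent to itself, $u_{xyz}$ is adjacent to all of $x,y,z$ yet equals none of them, so $u_{xyz}\notin W$. Now $u_{xyz}$ lies on exactly one rectilinear row and one rectilinear column, and each of $x,y,z$, being adjacent to $u_{xyz}$ but distinct from it, lies on exactly one of those two lines; by pigeonhole two of them lie on the same one. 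By the row/column symmetry of $H(2,q)$ we may assume this line is $u_{xyz}$'s row $L$ and that the two points are $x$ and $y$, so that $x$, $y$, and $u_{xyz}$ are three distinct points of $U$ on $L$.

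It remains to produce a fourth point of $U$ on $L$. If $z\in L$, then $\{x,y,z,u_{xyz}\}$ already consists of four distinct points of $U$ on $L$ and we are done, so assume $z\notin L$; then $z$ lies on $u_{xyz}$'s column $C$. Next I would bring in the witness $u_{xy}$, adjacent to both $x$ and $y$. Since $x$ and $y$ are distinct points of the row $L$ they lie in different columns, so $u_{xy}$ cannot be adjacent to both of them through its column; it must share a row with one of them, and since both $x$ and $y$ have row $L$, this forces $u_{xy}\in L$. Finally $u_{xy}$ is a genuinely new point: it differs from $x$ and $y$ by non-self-adjacency, from $u_{xyz}$ because the two witnesses have different traces on $W$, and from $z$ because $u_{xy}=z$ would make $z$ adjacent to $x$, whereas $z$ shares neither a row with $x$ (that row is $L$, and $z\notin L$) nor a column with $x$ (since $x\in L$ and $x\neq u_{xyz}$ give $x\notin C$, while $z\in C$). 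Hence $x,y,u_{xyz},u_{xy}$ are four distinct points of $U$ on the single rectilinear line $L$, which is the conclusion; incidentally this re-derives that $q\geq 4$ is necessary.

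I expect the main obstacle to be bookkeeping rather than insight: one must keep straight which adjacencies are realized along rows and which along columns, and in particular guard against the witness $u_{xy}$ secretly coinciding with $z$. Organizing the whole argument around the single witness $u_{xyz}$, and using only that $u_{xyz}$ is adjacent to all of $W$ while $u_{xy}$ is adjacent to $x,y$ but not $z$, reduces the casework to the one split ($z\in L$ or $z\notin L$) handled above and avoids having to enumerate the geometry of $W$ at all.
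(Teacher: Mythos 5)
Your proof is correct, and it takes a genuinely different route from the paper's. The paper argues by contradiction/contrapositive: assuming every line carries at most three points of $U$, it enumerates the four possible incidence shapes of the shattered triple $W$ (all three collinear; a corner; two collinear and one off both lines; no two collinear) and shows in each case that some required witness cannot exist. You instead argue directly and constructively: you start from the single witness $u_{xyz}$, observe that each of $x,y,z$ must lie on its row or its column, pigeonhole two of them onto one line $L$, and then produce the fourth point on $L$ either as $z$ itself or as the witness $u_{xy}$ (which is forced onto $L$ because $x$ and $y$ occupy distinct columns of $L$). This collapses the paper's four-way case analysis on the geometry of $W$ into a single dichotomy ($z\in L$ or not), at the cost of the distinctness bookkeeping you correctly carry out --- in particular ruling out $u_{xy}=z$ via the observation that $z$ shares neither row nor column with $x$, and $u_{xy}=u_{xyz}$ via their differing traces on $W$. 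Both arguments are sound; yours has the added merit of exhibiting explicitly which four vertices of $U$ witness the conclusion, and of making visible that the hypothesis $q\geq 4$ is a consequence rather than an input.
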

\begin{proof}
    We will proceed by way of contradiction. Suppose there is a subset $U$ of the vertices of $H(2,q)$ with no more than three points on any horizontal or vertical line, but so that $(U,n(U))$ has VC-dimension 3. That is, we will assume that there is a subset $W\subseteq U$ with $|W|=3,$ that is shattered by the points in $U.$ There are four possible configurations of points forming the shattering set which we address in four separate cases. We will see that the condition of no more than three points on any line will make it so that none of them can be shattering sets.

\begin{figure}
\centering
\includegraphics[scale=1.2]{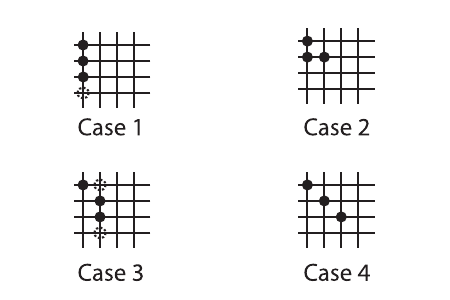}
\caption{The four cases, with the shattering points filled in, and other points of interest indicated with dotted lines.}
\label{figure1}
\end{figure}

    Case 1: All three points from the shattering set are on a line. 
    We will consider these points to be 
    \[\{(a,b),(a,c),(a,d)\}.\]
    %\layna{I want a picture here} \steven{I'll make one:-)}
    With these three points and exactly three points on any one line, there does not exist a fourth point $(a,e)\in U$ and thus there is a contradiction since there is no point adjacent to all three of the points, so VC-dimension cannot be 3.

    Case 2: The three points from the shattering set form a corner.
    We will consider these points to be 
    \[\{(a,b), (c,b), (a,c)\}.\]
    Since it is VC-dimension 3, there must be a point adjacent to all three shattering points. This point does not exist and thus the VC-dimension cannot be 3.

    Case 3: Two points from the shattering set are on a line and the third is not on any line with the other two. Without loss of generality two will be on a vertical line.
    We will consider these points to be 
    \[\{(a,b), (c,d), (c,e)\}.\]
    Since $(U,n(U))$ has VC-dimension 3, the point $(c,b)$ must be in $U$ because it is the only point adjacent to all three other points. Since there are exactly three points in a line, there is no point in $U$ that is adjacent to both $(c,d)$ and $(c,e)$ but not $(a,b),$ which is a contradiction and thus the VC-dimension cannot be 3. 

    Case 4: If none of the three shattering points are on any line, then there can be no point adjacent to all three of them, and the VC-dimension is $<3.$
\end{proof}

\begin{lemma}\label{3tight}
For $q\geq 3,$ there exists a subset $U_2(q)$ of the vertices of $H(2,q)$ of $|U_2(q)|=3q$ so that the VC-dimension of $(U_2(q),n(U_2(q)))$ is strictly less than three.
\end{lemma}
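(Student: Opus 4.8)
The plan is to give an explicit construction and then quote Lemma~\ref{lemma23}. Working in $\mathbb{Z}_q^2$ with all coordinate arithmetic taken modulo $q$, I would take
\[U_2(q):=\{(i,i),\,(i,i+1),\,(i,i+2):i\in\mathbb{Z}_q\},\]
the union of the three ``diagonals'' $D_k:=\{(i,i+k):i\in\mathbb{Z}_q\}$ for $k=0,1,2$.

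First I would verify the size and the line counts. Each $D_k$ has exactly $q$ points, and the $D_k$ are pairwise disjoint: an equality $(i,i+k)=(j,j+k')$ with $k\neq k'$ forces $i=j$ and $k\equiv k'\pmod q$, which is impossible since $q\geq 3$ makes $0,1,2$ distinct residues. Hence $|U_2(q)|=3q$. (When $q=3$ the three diagonals already fill the grid, consistent with $9=3q$.) Moreover, the horizontal line $x=i$ meets $U_2(q)$ exactly in the three distinct points $(i,i),(i,i+1),(i,i+2)$, and the vertical line $y=j$ meets it exactly in $(j,j),(j-1,j),(j-2,j)$; so no row or column contains four points of $U_2(q)$.

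It then remains to invoke Lemma~\ref{lemma23}: were $(U_2(q),n(U_2(q)))$ of VC-dimension $3$, some row or column would carry four points of $U_2(q)$, contradicting the previous paragraph. The same reasoning excludes shattered sets of size $4$ or larger, since any shattered set contains a shattered set of size $3$, to which the proof of Lemma~\ref{lemma23} applies. Therefore the VC-dimension of $(U_2(q),n(U_2(q)))$ is strictly less than $3$.

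The one point requiring care is purely combinatorial bookkeeping: one must arrange the three diagonals so that they are genuinely disjoint and so that every line receives exactly three of them — which is exactly where the hypothesis $q\geq 3$ enters — after which Lemma~\ref{lemma23} supplies the rest of the argument.
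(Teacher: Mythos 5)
Your construction is exactly the one in the paper: row $i$ receives the three points in columns $i$, $i+1$, $i+2$ modulo $q$, every row and column gets exactly three points, and the contrapositive of Lemma~\ref{lemma23} finishes the argument. The proposal is correct and follows essentially the same route, with the added (harmless and correct) remark that a shattered set of size greater than three would contain a shattered triple.
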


\begin{proof}
We will now construct a subset $U_2(q)$ with size $3q$ that has VC-dimension $<3.$ Call the rows $R_j$ where $j$ ranges from 1 to $q.$ For $j=1, \dots q,$ select the points from columns $j, j+1,$ and $j+2$ modulo $q.$ Every row and every column has exactly 3 points from $U_2(q),$ for a total of $3q$ points. By the contrapositive of \ref{lemma23}, this cannot have VC-dimension 3, and the tightness is proved.
\end{proof}
\subsection{The lower bound}
We now move on to the case where $|U|\geq 3q+1.$ The goal will be to find a line $L$ with four points on it, $x,y,z,$ and $u_3$ such that the first three of those points also lie on lines $L_x, L_y,$ and $L_z$, respectively, each with at least one other point from $U$ on them, called $u_x, u_y,$ and $u_z,$ respectively. If $L$ is vertical, call this configuration of seven points a {\it vertical fist}, and if $L$ is horizontal, we call this configuration a {\it horizontal fist}. We now prove that the existence of a fist and one more special point $u_0\in U$ guarantees that $(U,n(U))$ has VC-dimension 3.

\begin{lemma}\label{fist}
For $q\geq 4,$ any subset $U$ of the vertices of $H(2,q)$ with $|U|\geq 3q+1,$ the VC-dimension of $(U,n(U))$ is three.
\end{lemma}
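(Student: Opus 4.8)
The plan is to prove the statement in two halves. First, we show that if $U$ contains a fist together with one further point $u_0\in U$ in suitably general position, the VC-dimension of $(U,n(U))$ is already at least $3$. Second, we show that the hypothesis $|U|\ge 3q+1$ forces such a configuration to lie inside $U$. The matching upper bound costs nothing: shattering a set $W\subseteq U$ by neighborhoods of vertices of $U$ is in particular a shattering of $W$ in $H(2,q)$, so the VC-dimension of $(U,n(U))$ is at most that of $H(2,q)$, which has VC-dimension $3$ (cf. \cite{ABM}).

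For the first half, suppose $U$ contains a fist along a line $L$, say vertical, with points $x,y,z,u_3$ on $L$ and further points $u_x,u_y,u_z$ of $U$ on the horizontal lines $L_x,L_y,L_z$ through $x,y,z$ respectively, and in addition a point $u_0\in U$ lying neither on $L$ nor on any of $L_x,L_y,L_z$. Take $W=\{x,y,z\}$ to be the shattering set. The mechanism is exactly that of Lemma~\ref{lemma22}: since $x,y,z$ lie on the common line $L$ they are pairwise adjacent, so $n(x),n(y),n(z)$ meet $W$ in the three two-element sets $\{y,z\},\{x,z\},\{x,y\}$; the fourth point $u_3$ of $L$ is adjacent to each of $x,y,z$, so $n(u_3)\cap W=W$; each $u_x$ agrees with $x$ in exactly one coordinate and hence differs from both $y$ and $z$ in both coordinates, so $n(u_x)\cap W=\{x\}$, and likewise $u_y,u_z$ realize $\{y\}$ and $\{z\}$; and $u_0$ is adjacent to none of $x,y,z$, so $n(u_0)\cap W=\varnothing$. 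All eight subsets of $W$ occur, so the VC-dimension is at least $3$.

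For the second half, call a line \emph{rich} if it meets $U$ in at least two points. Since $|U|>3q$, pigeonhole produces a column with at least four points of $U$, and symmetrically a row with at least four. If some column $L$ has at least four of its points on rich rows, we are finished: pick three of them as $x,y,z$, keep a remaining point of $L$ as $u_3$, and note that a fourth rich row through a point of $L$ carries a further point of $U$ that lies off $L$ and off the rows of $x,y,z$ --- this is $u_0$, and the first half applies. The same works with rows and columns interchanged. If neither case holds, then every column has at most three points on rich rows and every row has at most three points on rich columns; summing the former over the $q$ columns shows at most $3q$ points of $U$ lie on rich rows, while each non-rich row carries at most one point, so with $|U|\ge 3q+1$ the set $U$ is very tightly constrained. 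One then finishes by a short case analysis on the shape of $U$: whenever a column-fist cannot be completed because there is no room for a $u_0$, that obstruction forces enough of $U$ onto a few rows that a row-fist can be completed instead, and symmetrically; the remaining low-volume cases are handled directly using $q\ge 4$.

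The step I expect to be the main obstacle is producing $u_0$; everything else is essentially forced. A crude union bound only gives $|L\cup L_x\cup L_y\cup L_z|\le 4q-3$, which exceeds $3q+1$ once $q\ge 4$, so $u_0$ cannot be found by counting alone. The two escapes are to choose the perpendicular lines $L_x,L_y,L_z$ short --- if each meets $U$ in exactly two points the union bound drops to $q+3<|U|$ and $u_0$ appears immediately --- or, when $U$ is concentrated enough that no short perpendicular lines are available, to trade a column-fist for a row-fist, since a configuration leaving no room for $u_0$ in one orientation leaves ample room in the other. Organizing this dichotomy and disposing of the small-$q$ boundary cases is the delicate part of the argument.
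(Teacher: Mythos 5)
Your first half is correct and matches the paper's: a fist together with a point $u_0$ off all four of its lines shatters $W=\{x,y,z\}$, and the ambient upper bound of $3$ is indeed inherited by any subgraph. The gap is in the second half, and you have located it yourself: the existence of a fist \emph{with} a valid $u_0$ is the entire content of the lemma, and your argument for it is not a proof. Your counting in the residual case (every column has at most three points on rich rows, every row at most three on rich columns) only yields $|U|\le 3q+(\text{number of non-rich rows})\le 4q$, which is compatible with $|U|\ge 3q+1$ for all $q\ge 4$, so no contradiction or structure is actually extracted; the phrases ``the set $U$ is very tightly constrained,'' ``a short case analysis on the shape of $U$,'' and ``the remaining low-volume cases are handled directly'' stand in for the hard work rather than doing it. In particular the dichotomy you propose --- trade a column-fist for a row-fist when $u_0$ cannot be found --- is the right instinct but is not automatic: a column $L$ with $k\ge 4$ points whose populated rows all lie inside $L\cup L_x\cup L_y\cup L_z$ does not by itself hand you a full row, and you never verify that it does.

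The paper closes exactly this gap by a different organizing device: induction on $q$. If some row--column pair has union meeting $U$ in at most three points, delete both lines and land in $H(2,q-1)$ with at least $3(q-1)+1$ points; otherwise \emph{every} row--column union carries at least four points, and this global hypothesis is what forces the structure you were missing --- it pins the maximal line $L$ down to exactly $k=4$ points, forces every other column to have exactly three points, and then a pigeonhole over the at most $q-2$ rows not monopolized by $L$ produces a row with four points of $U\setminus L$, each sitting on a column with further points, i.e.\ a fist with room for $u_0$. A base case $q=4$ is checked by hand. If you want to salvage a non-inductive argument you would need to carry out, in full, the case analysis you deferred; as written, the lemma is not proved.
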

\begin{proof}
Given a vertical fist defined as above, we will select a shattering set $W:=\{x,y,z\}.$ Recall that we need to find points $u\in U$ to give us the eight possible subsets $S$ (elements of the power set of $W$) when we intersect $W$ with the various choices of $n(u).$ Pick any point from $U$ that is not on any of $L, L_x, L_y,$ or $L_z,$ to be $u_0.$ Notice that $u_0$ is not adjacent to any vertex from $W,$ giving us the empty set. Next, we see that $u_x$ is adjacent to $x,$ but not any other vertex in $W.$ The same relationship holds for $u_y$ and $u_z$ with respect to $y$ and $z.$ So these points give us the singleton subsets of $W.$ For the two-element subsets of $W,$ we select the neighborhood of the third element. Specifically, notice that $n(x)\cap W = \{y,z\}, n(y)\cap W = \{x,z\},$ and $n(z)\cap W = \{x,y\}.$ Finally, any point from $L\setminus W$ will be adjacent to all three elements of $W.$ We often call this point $u_3.$

\begin{figure}
\centering
\includegraphics[scale=1.2]{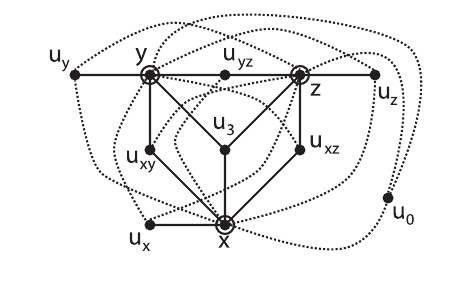}
\caption{The vertices from the shattering set $W=\{x,y,z\}$ are circled. When two vertices are adjacent, an edge is indicated with a solid line. When two vertices may not be adjacent, this is indicated with a dotted curve. Note that as long as the edge relations are allowed, some vertices in the figure may be identified with one another. So, for example, $x$ cannot be $y,$ but it could be $u_{yz}.$ }
\label{figure2}
\end{figure}

We now proceed by induction on $q.$ For the base case, set $q=4,$ so that $|U|\geq 3q+1 = 13$ and verify by hand that there must be a fist. To see this, notice that by the pigeonhole principle, there must be at least one column $L$ with four points in it, and of the remaining twelve possible points, at least nine must be in $U.$ We now split into two cases based on the locations of points in $U\setminus L.$ Since each of the four rows has three possible locations for points outside of $L$, either each row has at least one point from $U\setminus L,$ or there is a row in $U \setminus L$ with no points.

In the first case, three of the populated rows will each contribute one of $u_x, u_y,$ and $u_z,$ to a vertical fist configuration, each lying on the row alongside $x,y,$ and $z,$ respectively, chosen from $L.$ The fourth point on $L$ will be $u_3,$ and $u_0$ will be another point on that row.

In the second case, call one of the full rows $R,$ and we will find a horizontal fist. Now select three of the points in $R$ to be $x,y,$ and $z,$ respectively, and let the fourth point in $R$ be $u_3.$ Now, as each of these points has another point from $U$ in its column, we select each of these to be $u_x,u_y,$ and $u_z,$ and $u_0,$ respectively.

Since either case leads to a fist and a valid choice for $u_0,$ we have VC-dimension 3 for any subset $U$ of the vertices of $H(2,4),$ of size $\geq 13 = 3(4)+1.$ Next we address the induction step. Suppose that for all $4\leq p<q,$ we have that any subset of $H(2,p)$ of size at least $3p+1$ has VC-dimension 3. Now we look at any subset $U$ of size exactly $3q+1$ in $H(2,q).$ If $U$ had more points, we can throw them out until we have exactly $3q+1$ without increasing the VC-dimension. If we can find a row and column who have three or fewer points total in their union, then we remove that row and column, and are left with a subset of $H(2,q-1)$ with size at least $3(q-1)+1,$ and are therefore done by induction.

If not, then the union of every pair of row and column in $H(2,q)$ must have at least four points in it. Now, consider $L$ the line with the most points on it. Without loss of generality, suppose $L$ is a column with $k$ points, where, by the pigeonhole principle, we have that $k\geq 4.$ If at least four of the rows with points on $L$ have other points from $U,$ then we can choose three of the rows with points on $L$ and other points from $U$ to be called $R_x, R_y, R_z,$ and $R_0.$ We set $x= R_x\cap L,$ and let $u_x$ be another point from $U$ on $R_x.$ We do the same for $u_y, u_z,$ and $u_0,$ and see that we have a vertical fist.

However, if only three of the rows are populated by other points, then we select $R_x, R_y,$ and $R_z,$ as before, but have two simple cases to address. If there is some point off of $L\cup R_x \cup R_y \cup R_z,$ we call it $u_0$ and pick another point from $L$ to be $u_3.$ If not, then because we have exactly $3q+1$ points, we must have that $R_x, R_y,$ and $R_z$ are full rows, and we can choose one of them to be $R,$ and find a horizontal fist as we did in the $q=4$ case.

So suppose that there are at least $k-2$ rows whose only points are on $L.$ Because every pair of row and column must have a union with at least four points on it, every column must have at least 3 points on it. Now we count the points off of $L.$ Let $U'$ denote $U\setminus L.$ Since there are $k$ points on $L$, we must have that $|U'|=3q+1-k.$ However, recall that all $q-1$ columns (not counting $L$) must have at least 3 points. So we also have that $|U'|\geq 3(q-1) =3q-3.$ Comparing these bounds, we get
\[3q+1-k=|U'|\geq 3q-3\Rightarrow k\leq 4.\]
So we must have that $k=4,$ and therefore every column except $L$ has exactly three points from $U$. So, ignoring the at least two rows that have exactly one point from $L,$ there are at most $q-2$ rows with a total of $3q-3$ points from $U'.$ Therefore, by the pigeonhole principle, there must be a row with at least
\[\left\lceil\frac{3q-3}{q-2}\right\rceil = 4\]
points from $U'.$ Call this row $R.$ Since $q>4,$ there must be at least four columns excluding $L.$ Recall that every column in $U'$ has exactly three points, so pick four points on $R$ to be $x,y,z,$ and $u_3.$ Notice that each of $x,y,$ and $z$ have another point on their column, so we have a vertical fist. Finally, let $u_0$ be another point from the column with $u_3,$ and we are done.
\end{proof}

\section{Sharpness examples for $d>2$}

\subsection{Proof of Proposition \ref{sharp3}}
We now construct a set $U_3(q)$ of the vertices of $H(3,q)$ of size $|U_3(q)| = 5q^2/4$ with VC-dimension 1 for even $q$. The basic idea is to consider $H(3,q)$ as $q$ copies of $H(2,q)$ stacked on top of one another. Each pair of copies of $H(2,q)$ will be called a `layer,' so there will be a total of $q/2$ layers.

The subset in the bottom copy of $H(2,q)$ will be the set $U_1(q)$, described in the proof of Proposition \ref{prop22}. Then for the second copy of $H(2,q),$ we add one point above each of the squares in the first copy. For the next layer, we just take the first layers and shift everything to the side by two points, wrapping back around if things slide off. Each layer will have $5q/2$ points, and we can do this for $q/2$ layers, for a total of $5q^2/4$ points.

We are taking five points from the $2\times 2\times 2$ cubes that form the diagonal of the first layers, then shifting this whole arrangement to the side for each subsequent pair of layers, so that no cube is ever in a rectilinear line with any other cube. The subgraph given by the vertices in this construction will consist of $q^2/4$ connected components, consisting of five vertices each. Each component is isomorphic to $G(V,E)$ given by
\[V:=\{a,b,c,d,e\} \text{ and } E:=\{\{a,b\},\{b,c\},\{c,d\},\{d,a\},\{d,e\}\}\]

Any pair of vertices chosen from distinct components could not ever both be adjacent to the same vertex, so there would be no candidate for $u_{xy}.$ Therefore, if there is a shattering set of size two, both vertices must come from the same component. One can exhaustively check that no pair of vertices from any of the components can shatter. To see this, notice the two vertices cannot both come from the first copy of $H(2,q),$ by construction of $U_1(q).$ So one of the shattering vertices must be $e.$ Since there must be a vertex adjacent to both shattering vertices, $d$ cannot be the other shattering vertex. The vertices $c$ and $a$ would not work, as $e$ would not have a vertex that it is adjacent to while the other shatter point is not. Finally, $b$ cannot be the other shattering vertex as $b$ and $e$ have no neighbors in common.

\begin{figure}
\centering
\includegraphics[scale=1.2]{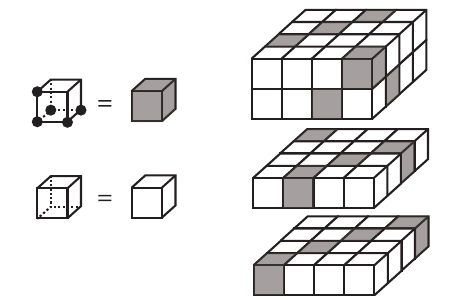}
\caption{Each $2\times 2\times 2$ subcube will either have five points (shaded) or zero (unshaded). Every layer will be a pair of copies of $H(2,q)$ will have $q/2$ of these subcubes, arranged diagonally. This pattern will shift with subsequent layer, so that shaded cubes are never in line with one another.}
\label{figure3}
\end{figure}

\subsection{Proof of Proposition \ref{sharp4+}}
For appropriate $d$ and $q,$ we will identify the vertices of $H(d,q)$ with $\mathbb Z_q^d,$ so coordinatewise addition and subtraction will be modulo $q.$ Define the set $U_d'(q)\subset H(q,d)$ by
\[U_d'(q):=\left\{(x_1, x_2, \dots, x_d): x_d = \sum_{j=1}^{d-1} x_j\right\}.\]
We now verify that there are no two vertices that are adjacent by way of contradiction. Suppose that $\vec x=(x_1, x_2, \dots, x_d)$ and $\vec y = (y_1, y_2, \dots, y_d)$ are adjacent. That means that they differ in exactly one coordinate. First, suppose they differ in their final coordinate. Then we would have
\[x_d = \sum_{j=1}^{d-1} x_j = \sum_{j=1}^{d-1} y_j = y_d.\]
But then $x_d$ would be equal to $y_d,$ which is a contradiction. Next, suppose they differ in exactly one coordinate from the first $d-1.$ Without loss of generality, suppose it is the first coordinate. Then we would have
\[x_1 = x_d - \sum_{j=2}^{d-1} x_j = y_d - \sum_{j=1}^{d-1} y_j = y_1,\]
which leads to a similar contradiction. Since $U_d'(q)$ has no adjacent vertices, the VC-dimension of $(U_d'(q),n(U_d'(q)))$ is 1.

\subsection{Proof of Proposition \ref{3sharp4+}}
We now prove a modified version of Lemma \ref{lemma23} for $d>2$. To do this, we define another point configuration that will help us to characterize subsets of vertices of $H(d,q)$ of VC-dimension 3.
\subsubsection{Rectangles}
We call  a set of four points in a plane that lie on the four intersection points of two pairs of parallel lines a {\it rectangle}. For example, the following set of points form a rectangle in $H(4,9)$:
\[\{(1,1,3,8),(1,4,3,8),(1,1,6,8),(1,4,6,8)\}.\]
\begin{lemma}\label{rectangles}
    If $U$ is a subset of the vertices of $H(d,q)$ so that the VC-dimension of $(U,n(U))$ is 3, then either $U$ has a line with at least 4 points on it, or it has a rectangle.
\end{lemma}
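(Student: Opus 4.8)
The plan is to follow the spirit of Lemma~\ref{lemma23}, but to organize the case analysis around a single point adjacent to the entire shattering set. Suppose $(U,n(U))$ has VC-dimension $3$ and let $W=\{x,y,z\}\subseteq U$ be a shattered set. By the definition of shattering, $U$ contains a vertex $u$ with $n(u)\cap W=W$ (a common neighbor of $x,y,z$) and a vertex $v$ with $n(v)\cap W=\{x,y\}$ (adjacent to $x$ and $y$ but not to $z$). In particular $u\neq v$, since $u$ is adjacent to $z$ while $v$ is not, and $u,v\notin W$. Because $u$ is adjacent to each of $x,y,z$, each of these vertices differs from $u$ in exactly one coordinate; call those coordinates $a,b,c$, respectively. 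Since the hypothesis is symmetric under permuting the elements of $W$, I may assume, whenever two of $a,b,c$ coincide, that $a=b$.

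I would first record the elementary fact that if two distinct vertices of $H(d,q)$ differ in exactly one coordinate, hence lie on a common rectilinear line $L$, then any vertex adjacent to both of them also lies on $L$; this is a one-line coordinate check on which coordinate a common neighbor differs from each of the two vertices in. Now suppose two of $a,b,c$ coincide, so $a=b$. Then $x$ and $y$ both agree with $u$ off coordinate $a$, so $x,y,u$ are three distinct points of a line $L$ in direction $a$; by the fact just noted, $v$ also lies on $L$, and $v\notin\{x,y,u\}$, so $L$ carries four distinct points of $U$ and we are done.

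Now suppose $a,b,c$ are pairwise distinct. Then $x$ and $y$ agree with $u$ outside coordinates $a$ and $b$ and disagree with $u$ exactly at $a$ and at $b$, so $x$ and $y$ differ in precisely the two coordinates $a$ and $b$. A short case analysis --- examining the two coordinates in which a common neighbor of $x$ and $y$ must differ from $x$ and from $y$ --- shows that the only vertices adjacent to both $x$ and $y$ are $u$ itself and the ``fourth corner'' $p$, namely the unique vertex agreeing with $x$ in coordinate $a$, with $y$ in coordinate $b$, and with $u$ in every other coordinate. Since $v\neq u$, this forces $v=p$, so $p\in U$. One then checks that $x,y,u,p$ are four distinct vertices sitting at the four intersection points of the lines ``coordinate $a$ equals $u_a$ or $x_a$'' with the lines ``coordinate $b$ equals $u_b$ or $y_b$'', all inside the plane on which only coordinates $a$ and $b$ vary; that is, $\{x,y,u,p\}$ is a rectangle contained in $U$.

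The main obstacle is the coordinate bookkeeping in this last case: pinning down that two vertices differing in exactly two coordinates have precisely the two ``diagonal'' corners of the rectangle they span as their common neighbors. Everything else reduces to checking that the relabelings are legitimate and that the four points produced are genuinely distinct, which uses exactly that $v$ is adjacent to $x$ and $y$ but not to $z$, so $v\neq x,y,u$.
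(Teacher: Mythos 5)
Your proof is correct and follows essentially the same route as the paper's: both arguments extract the two distinct common neighbors of $x$ and $y$ guaranteed by shattering (the point $u_3$ adjacent to all of $W$ and the point $u_{xy}$ adjacent to $x$ and $y$ but not $z$) and show that the resulting four distinct points must form either four points on a line or a rectangle, with the same case split on whether $x$ and $y$ are collinear. Your version is somewhat more explicit in verifying that two vertices differing in exactly two coordinates have precisely the two ``diagonal corners'' as common neighbors, a step the paper's second case passes over more quickly.
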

\begin{proof}
    Similar to the proof of Lemma \ref{lemma23}, we will begin by assuming that there is a shattering set $W\subseteq U$ with size 3. If $U$ has either four points on a line or a rectangle, we are done, so we proceed supposing that $U$ has neither of these.
    
    We now focus on the distinct points $x,y,$ and $z$ which together comprise $W.$ Because $W$ can be shattered, there must be points $u_{xy},u_3\in U$ such that $u_{xy},u_3\in n(x)\cap n(y).$ Note that $x\notin n(x)$ and $y\notin n(y),$ so neither of $x$ or $y$ can be either $u_{xy}$ or $u_3.$ Moreover, $u_{xy}\notin n(z),$ but $u_3\in n(z),$ so we have $u_{xy}\neq u_3.$ Combining these facts, we see that the set
    \[F:=\{x,u_{xy},y,u_3\}\subseteq U\]
    consists of four distinct vertices. Now we split into two cases: either $x$ and $y$ live on the same line, or they do not.
    
    In the first case, the fact that $x$ and $y$ are both adjacent to $u_{xy}$ means that $u_{xy}$ must be on the same line as both $x$ and $y,$ as it would otherwise only be adjacent to at most one of $x$ and $y$. However, the same holds for $u_3,$ and since $F$ consists of four distinct points, this means that we would have four points on a line. Since we have assumed we do not have four points on any line, the first case cannot occur, and $x$ and $y$ cannot be on the same line.
    
    To address the second case, where $x$ and $y$ do not lie on the same line, we first consider that $u_{xy}$ must be adjacent to both. So there must be a line $L_1$ through $x$ that intersects a line $L_2$ through $y,$ with $u_{xy}$ lying on their intersection. From this, we see that $x$ and $y$ can differ in at most two coordinates. Therefore, we have that $x,y,$ and $u_{xy}$ all live in some plane $P.$ Similarly, $u_3$ must be adjacent to both $x$ and $y,$ so there must be a line $L_3$ through $x$ that intersects a line $L_4$ that goes through $y$, and their intersection must be $u_3.$ But since $x,y,$ and $u_{xy}$ all live in the plane $P,$ and $u_3$ only differs from $x$ and $y$ in one coordinate each, we see that $u_3$ must also live in $P.$ Therefore the set $F$ would form a rectangle, which contradicts our assumption that there was no rectangle in $U.$
\end{proof}
\subsubsection{Constructing a set with no rectangles}
We now demonstrate the sharpness of Corollary \ref{phpCor}, as claimed by Proposition \ref{3sharp4+}, by the following construction, which is a variant of the construction in Proposition \ref{sharp4+}. We will see that this subset of $H(d,q)$ will have no line with four points, nor any rectangles, and therefore has VC-dimension strictly less than 3. However, as an arithmetic artifact of the proof, we will need $q\geq 6,$ rather than the usual $q\geq 4.$
\begin{lemma}\label{3diagonals}
    Given natural numbers $d\geq 3$ and $q\geq 6,$ there exists a subset $U$ of the vertices of $H(d,q)$ of size $|U|=3q^{d-1}$ so that $(U,n(U))$ has VC-dimension 2. %\layna{but if we take d=3 and q=4 3q$^{(d-1)}$=3*4*4=48. Only 12 points are left unshaded, This incredibly generous subset has VC 3? If we do mean 3q instead of including the power this gets incredibly silly and trivial in higher dimensions as you could just choose 3q points such that none of them have any adjacency to one another making the VC-dim=0. This might be true for larger d, but I don't think that it is true for $q\geq4$. You cannot have 48/64 points on a 4x4x4 cube and not get rectangles.}
\end{lemma}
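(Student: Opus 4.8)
The plan is to generalize the construction behind Proposition~\ref{sharp4+}: rather than a single diagonal hyperplane, I would take three parallel translates of one, chosen so that no rectilinear line picks up a fourth point and no rectangle is created, invoke Lemma~\ref{rectangles} to force VC-dimension at most $2$, and then exhibit a concrete shattered pair to pin the value down to exactly $2$. Concretely, identify the vertices of $H(d,q)$ with $\mathbb Z_q^d$, fix a $3$-element set $D\subseteq\mathbb Z_q$ (to be chosen below), and put
\[
U:=\Bigl\{(x_1,\dots,x_d): x_d-\sum_{j=1}^{d-1}x_j\in D\Bigr\}.
\]
For each of the $q^{d-1}$ choices of $(x_1,\dots,x_{d-1})$ there are exactly three admissible values of $x_d$, so $|U|=3q^{d-1}$, as required.

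That $U$ has no line with four points is immediate: along any rectilinear line the quantity $x_d-\sum_{j<d}x_j$ is an affine bijection of the coordinate being varied, so the line meets $U$ in exactly $|D|=3$ points, regardless of direction or of $D$. The harder point is the absence of rectangles. Consider four vertices $P_{ab}$ ($a,b\in\{1,2\}$) agreeing outside two coordinates $i<j$, with $i$-th coordinate $s_a$ ($s_1\ne s_2$) and $j$-th coordinate $t_b$ ($t_1\ne t_2$). Writing $\delta_{ab}$ for the value of $x_d-\sum_{k<d}x_k$ at $P_{ab}$, a one-line computation (whether or not $j=d$) gives $\delta_{11}+\delta_{22}=\delta_{12}+\delta_{21}$ together with $\delta_{11}\ne\delta_{21}$ and $\delta_{11}\ne\delta_{12}$. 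Since all four defects lie in $D$, a rectangle forces elements $A,B,C,E\in D$ with $A\ne B$, $A\ne C$ and $A+E=B+C$, and conversely any such quadruple produces a rectangle in $U$.

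Such a quadruple exists precisely when $D$ fails to be a Sidon ($B_2$) set in $\mathbb Z_q$, so it suffices to choose $D$ to be a $3$-element Sidon set; for instance $D=\{0,1,3\}$ works whenever the six sums $0,1,2,3,4,6$ are pairwise distinct mod $q$, which holds for all $q\ge 7$, with the borderline value $q=6$ handled by a small separate choice --- this is the arithmetic point forcing $q\ge 6$. By Lemma~\ref{rectangles} we conclude $\mathrm{VC}(U,n(U))\le 2$. For the matching lower bound, arrange $0\in D$ and take $x=(0,\dots,0)$ and $y=(q-1,0,\dots,0)$, which both lie in $U$ and are adjacent; then $n(x)\cap\{x,y\}=\{y\}$ and $n(y)\cap\{x,y\}=\{x\}$, the vertex $(q-3,0,\dots,0)\in U$ is adjacent to both $x$ and $y$, and the vertex $(q-3,2,0,\dots,0)\in U$ is adjacent to neither (all memberships are immediate defect checks, valid once $q$ is not too small). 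Hence $\{x,y\}$ is shattered by $n(U)$, and the VC-dimension is exactly $2$.

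The main obstacle is the rectangle analysis: recognizing that avoiding every rectangle in this three-sheet construction is equivalent to $D$ being a size-$3$ Sidon set in $\mathbb Z_q$, and hence that the construction requires $q$ to be moderately large, with $q=6$ a genuine boundary case. Everything else --- the point count, the line count, and the explicit shattered pair --- is routine verification.
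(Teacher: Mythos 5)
Your construction and overall strategy coincide with the paper's: the set $U_d''(q)$ there is exactly your $U$ with $D=\{-1,0,2\}$, and the paper likewise rules out four collinear points and rectangles and then invokes Lemma~\ref{rectangles}. Your reformulation of the rectangle condition --- four defects $A,E,B,C\in D$ with $A+E=B+C$ and $\{A,E\}\cap\{B,C\}=\varnothing$, which is precisely the failure of the Sidon property for $D$ --- is correct in both directions and is cleaner than the paper's case analysis, which grinds out the same identity \eqref{sumEq} together with the same pairwise inequalities. You also supply the lower bound via an explicitly shattered pair, which the paper's proof omits even though the lemma asserts VC-dimension exactly $2$; your verification of that pair checks out for $q\geq 6$ and $d\geq 3$.

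The genuine gap is the case $q=6$, which you defer to ``a small separate choice'': no such choice exists. A $3$-element Sidon set $\{a,b,c\}\subseteq\mathbb Z_6$ would have its six pairwise sums $2a,2b,2c,a+b,a+c,b+c$ exhausting $\mathbb Z_6$, forcing $4(a+b+c)\equiv 0+1+\cdots+5\equiv 3\pmod 6$, which is impossible since $4s$ is even modulo $6$ for every $s$. By your own equivalence, every set of the form $\{\vec x: x_d-\sum_{j<d}x_j\in D\}$ with $|D|=3$ therefore contains a rectangle when $q=6$, so the lemma cannot be established this way at $q=6$; a genuinely different construction, or a direct argument that the rectangles present do not yield a shattered triple, would be needed there. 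It is worth flagging that the paper's own proof has the same defect: $\{-1,0,2\}$ satisfies $(-1)+(-1)\equiv 2+2\pmod 6$, the four points $(0,0,5),(0,0,2),(3,0,5),(3,0,2)$ (padded with zeros in the middle coordinates when $d>3$) form a rectangle in $U_d''(6)$, and the paper's step deducing $y_1=w_1$ from $2(y_1-w_1)=0$ is invalid for even $q$. For $q\geq 7$ your argument is complete and correct.
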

\begin{proof}
    For appropriate $d$ and $q,$ define the set $U_d''(q)\subset H(q,d)$ by
\[U_d''(q):=\left\{(x_1, x_2, \dots, x_d): x_d = \epsilon + \sum_{j=1}^{d-1} x_j, \epsilon\in\{-1,0,2\}\right\}.\]
To visualize this set, we focus the ``two-dimensional slices," or copies of $H(2,q)$ that are subgraphs of $H(d,q)$ where only two coordinates vary. Fix any such slice and notice that the points of $U_d''(q)$ there will be two adjacent diagonal arrangements of points, one diagonal arrangement of spaces, then one more diagonal arrangement of points. See Figure \ref{figure5}.
\begin{figure}
\centering
\includegraphics[scale=1.2]{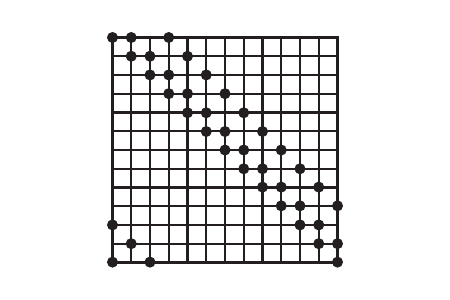}
\caption{This is a two-dimensional slice of $U_d''(12).$}
\label{figure5}
\end{figure}
We first show that this set has no line with four points. To see this, we split into two cases. Assume that either the line consists of points that are constant except in the final coordinate, or one of the first $d-1$ coordinates. We start with the case that the points vary in the final coordinate. Then all such points are of the form $\vec x = (x_1, \dots, x_d),$ where
\[x_d = \epsilon + \sum_{j=1}^{d-1}x_j,\]
where $\epsilon \in \{-1,0,2\}.$ Since the first $d-1$ coordinates are constant, $x_d$ can only take three possible values, depending only on choice of $\epsilon,$ so there can be at most three points on any line varying in only the final coordinate. Next, we deal with the case that the points on the line vary in one of the first $d-1$ coordinates. Without loss of generality, we can assume that they vary in only the first coordinate. But then $x_2,\dots, x_d$ are fixed, and all such points must satisfy
\[x_1 = x_d - \epsilon - \sum_{j=2}^{d-1}x_j,\]
giving us again only at most three choices of points, depending on $\epsilon.$
We now show that there is no rectangle in $U_d''(q).$ This fact will follow from similar, though more involved, case work. A rectangle will live in a plane, meaning that there will be exactly two coordinates varying between the four points. If there is a rectangle, it will consist of the points $\vec x, \vec y, \vec z,$ and $\vec w,$ with each point adjacent to the points it was listed next to, and with $\vec w$ adjacent to $\vec x.$ Moreover, suppose that we define $\epsilon_x$ by
\[x_d = \epsilon_x+\sum_{j=1}^{d-1}x_j.\]
Similarly, define $\epsilon_y, \epsilon_z,$ and $\epsilon_w.$ As before, we will either have that the final coordinate varies, or it does not. This gives us two cases, but we will only give the full details for the first case, as the second case follows by a similar argument.

In the first case, we will assume without loss of generality that the points agree an all coordinates except the first and last. That is, for $j=2,\dots, d-1,$ we have $x_j=y_j=z_j=w_j.$ Again without loss of generality, we assume that
\begin{equation}\label{eqs}
x_1=y_1, y_d=z_d, z_1=w_1, \text{ and } x_d=w_d.
\end{equation}
Therefore, by \eqref{eqs} and the fact that the middle $d-2$ coordinates are the same, we have that
\[x_d\neq y_d=\epsilon_y+\sum_{j=1}^{d-1}y_j = \epsilon_y+\sum_{j-1}^{d-1}x_j = \epsilon_y+(x_d-\epsilon_x).\]
From this we gather that $\epsilon_x\neq \epsilon_y.$ Arguing similarly, we get $\epsilon_z\neq \epsilon_w.$ Next, we have
\begin{align*}
    y_1\neq z_1 &= z_d-\epsilon_z-\sum_{j=2}^{d-1}z_j\\
    &= y_d-\epsilon_z-\sum_{j=2}^{d-1}y_j\\
    &=\left(\epsilon_y+\sum_{j=1}^{d-1}y_j\right)-\epsilon_z-\sum_{j=2}^{d-1}y_j\\
    &= \epsilon_y+y_1-\epsilon_z,
\end{align*}
which tells us that $\epsilon_y\neq \epsilon_z.$ Again, arguing similarly we see that $\epsilon_w\neq \epsilon_x.$ By the restrictions above, and the fact that the set $\{\epsilon_x, \epsilon_y, \epsilon_z, \epsilon_w\}\subseteq \{-1,0,2\}$, which has only three distinct elements, we see that we must have either $\epsilon_x=\epsilon_z$ or $\epsilon_y=\epsilon_w.$ Without loss of generality, suppose $\epsilon_x=\epsilon_z.$ Plugging this back in to what we know about $\vec x$ and $\vec z,$ we get
\[x_d - \sum_{j=1}^{d-1}x_j = \epsilon_x = \epsilon_z = z_d - \sum_{j=1}^{d-1}z_j.\]
Separating out the first term and recalling that the middle $d-2$ coordinates agree, we get
\[x_d - x_1-\sum_{j=2}^{d-1}x_j = \epsilon_x = \epsilon_z = z_d - z_1-\sum_{j=2}^{d-1}z_j= z_d - z_1-\sum_{j=2}^{d-1}x_j.\]
This tells us that
\[x_d - x_1 = z_d - z_1.\]
Appealing to \eqref{eqs}, this gives us
\[w_d-y_1 = y_d-w_1\Rightarrow w_d+w_1 = y_d+y_1.\]
Plugging in the definitions of $w_d$ and $y_d,$ we get
\[\left(\epsilon_w+\sum_{j=1}^{d-1}w_j\right)+w_1 = \left(\epsilon_y+\sum_{j=1}^{d-1}y_j\right)+y_1\]
Again using the fact that the middle $d-2$ coordinates agree, and subtracting their sum from both sides, gives us
\[\epsilon_w+2w_1 = \epsilon_y+2y_1 \Rightarrow \epsilon_w-\epsilon_y = 2(y_1-w_1).\]
Now, if $\epsilon_w=\epsilon_y,$ then we would have that $y_1=w_1.$ By \eqref{eqs}, this would lead to $x_1=y_1=z_1=w_1,$ which, as the points also share the middle $d-1$ coordinates, contradicts the fact that there are not four points on any line. So we proceed with the assumption that $\epsilon_w\neq \epsilon_y.$ This means that as sets,
\begin{equation}\label{setEq}
\{\epsilon_x,\epsilon_y,\epsilon_w\} = \{-1,0,2\}.
\end{equation}
To finish up the first case, we appeal to \eqref{eqs} twice, following similar reasoning to the above calculations, and compute
\[x_d+z_d = w_d+y_d\]
\[\left(\epsilon_x+\sum_{j=1}^{d-1}x_j\right)+\left(\epsilon_z+\sum_{j=1}^{d-1}z_j\right)=\left(\epsilon_w+\sum_{j=1}^{d-1}w_j\right)+\left(\epsilon_y+\sum_{j=1}^{d-1}y_j\right)\]
\[\epsilon_x+x_1+\epsilon_z+z_1=\epsilon_w+w_1+\epsilon_y+y_1\]
\begin{equation}\label{sumEq}
\epsilon_x+\epsilon_z=\epsilon_y+\epsilon_w.
\end{equation}
This is where our lower bound $q\geq 6$ becomes necessary, as the following argument fails for $q \leq 5.$ We now show that \eqref{sumEq} is not compatible with the possible values given by \eqref{setEq} and the fact that $\epsilon_x=\epsilon_z.$ To see this, one need only check each possible value of $\epsilon_x,$ because $\epsilon_x= \epsilon_z$ and by \eqref{setEq}, this choice determines the value of the sum $\epsilon_y+\epsilon_w.$ Specifically,
\[\epsilon_x+\epsilon_z = 0 + 0\neq 1= (-1)+2= \epsilon_y+\epsilon_w,\]
\[\epsilon_x+\epsilon_z = (-1)+(-1) \neq 2 = 0+ 2 \epsilon_y+\epsilon_w,\]
\[\epsilon_x+\epsilon_z = 2 + 2\neq -1= (-1) + 0\epsilon_y+\epsilon_w.\]

With the first rectangle case dealt with, we indicate how the second case is handled, but do not go into full detail, as the proof is quite similar. In the second case, we can assume without loss of generality that only the first two coordinates vary. That is, we will have $x_d=y_d=z_d=w_d.$ This gives us
\[\left(\epsilon_x+\sum_{j=1}^{d-1}x_j\right)=\left(\epsilon_z+\sum_{j=1}^{d-1}z_j\right)=\left(\epsilon_w+\sum_{j=1}^{d-1}w_j\right)=\left(\epsilon_y+\sum_{j=1}^{d-1}y_j\right).\]
Since only the first two coordinates vary, this implies
\[\epsilon_x+x_1+x_2=\epsilon_z+z_1+z_2=\epsilon_w+w_1+w_2=\epsilon_y+y_1+y_2.\]
We use this along with a version of \eqref{eqs} in terms of first and second coordinates to say
\[x_2\neq y_2 = \epsilon_x +x_1+x_2- \epsilon_y -y_1 = \epsilon_x-\epsilon_y +x_2 \Rightarrow \epsilon_x\neq \epsilon_y,\]
and a similar argument would follow.

This set has no line with four points, nor any rectangle, which means that $(U_d''(q),n(U_d''(q)))$ has VC-dimension strictly less than 3 by Lemma \ref{rectangles}.
\end{proof}

\section{Other distances}

While this paper has focused on the most common case of $H(d,q,t)$ which is $t=1,$ we conclude with some preliminary results for $t\neq 1.$ In particular, we record some trivial observations. We then get a sharp result in the case that $t=d=2,$ and use this to derive nontrivial results for $t=2$ when $d\geq3.$ However, at the time of this writing, we know little else about other values of $t\neq 1.$

\subsection{Trivial cases}
First off, if $d=q=1,$ then the VC-dimension is 0 for any $t.$ We also note that if $t>d,$ there are no edges, so the VC-dimension is 0. Next, for any $d,q\in \mathbb N,$ with $q\geq 2,$ we note that if $t=0$, the VC-dimension is clearly 1, as the graph consists of $q^d$ loops, or vertices connected only to themselves. So in the notation above, any point can be $x$ and $u_x,$ and any other point is $u_0.$

\subsection{Proof of Theorem \ref{thm222}}

Theorem \ref{thm222} will follow by combining Propositions \ref{prop2q2} and \ref{2q2tight}. The case where $t=d$ is special, as it is the graph complement of $H(d,q,1).$ So for some relationships, we can just reverse the roles of certain vertices. For example, $u_0$ in the proof of Theorem \ref{thm23} could ostensibly serve as $u_3$ in a result on $H(2,q,2)$. However, as there are no loops in $H(d,q,t)$ for $t>0,$ we cannot rely on just reversing some of the relationships we used before. For example, in the proof of Proposition \ref{prop22}, we let $x=u_y,$ but we will not be able to use $x=u_x$ in a result on $H(2,q,2).$ So we use ideas from \cite{PSTT} to prove the following result.

\begin{proposition}\label{prop2q2}
    Given a natural number $q\geq 3,$ and subset $U$ of the vertices of $H(2,q,2)$ of size $|U|\geq 2q$, the VC-dimension of $(U,n(U))$ is at least 2.
\end{proposition}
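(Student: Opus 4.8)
The plan is to produce, inside any such $U$, a two-element set $W=\{p,p'\}\subseteq U$ together with four vertices of $U$ whose neighborhoods meet $W$ in the four subsets of $W$. The guiding observation is that in $H(2,q,2)$ two distinct vertices are adjacent precisely when they disagree in \emph{both} coordinates, so two distinct vertices fail to be adjacent exactly when they share a row or a column. Consequently, if $p\sim p'$ (a ``diagonal pair'') then the singletons $\{p\}$ and $\{p'\}$ are realized for free, by $n(p')$ and $n(p)$ themselves, and one needs only a $u_\varnothing\in U$ adjacent to neither and a $u_{pp'}\in U$ adjacent to both; if instead $p$ and $p'$ share a coordinate (an ``aligned pair'') then $\varnothing$ is free and one needs $u_p,u_{p'},u_{pp'}$. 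I would build $W$ from a diagonal pair in the generic case and from an aligned pair in a degenerate one.

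The first substantive step is to show that $U$ always contains a \emph{corner}: three points $(a,b_1),(a,b_2),(a',b_2)$ with $a\neq a'$ and $b_1\neq b_2$. For this I would pass to the bipartite incidence graph $B$ whose vertex classes are the $q$ columns and the $q$ rows, with one edge per point of $U$; a corner is exactly an edge both of whose endpoints have degree at least $2$. If $U$ had no corner, then every edge of $B$ would meet a vertex of degree $1$, and since such a vertex lies on a unique edge, assigning to each edge such an endpoint is injective, forcing $|U|=|E(B)|\le 2q$, with equality only if every vertex has degree $1$ --- impossible, since then $2|E(B)|=2q$ rather than $4q$. Hence $|U|<2q$, contradicting the hypothesis. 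Given a corner, set $p=(a,b_1)$ and $p'=(a',b_2)$ (these are adjacent) and $u_\varnothing=(a,b_2)$; the only thing still missing is a \emph{far point} $u_{pp'}=(e,f)\in U$ with $e\notin\{a,a'\}$ and $f\notin\{b_1,b_2\}$. So once some corner of $U$ has a far point we are done.

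The crux is therefore the dichotomy: either some corner of $U$ has a far point, or $U$ is two full rows or two full columns. To establish it I would start from an elbow $(a,b)$ (a corner exists) and examine how many points of $U$ lie in its row and in its column. If both are at least $3$, then imposing ``no far point'' on all the corners sharing that elbow pins every point of $U$ outside row $b$ into column $a$, so $U$ sits in the cross $\mathrm{col}(a)\cup\mathrm{row}(b)$ and $|U|\le 2q-1$, a contradiction. If exactly one of the two counts (say the column's) equals $2$ while the other is at least $3$, the same manoeuvre pins $U$ into $\mathrm{col}(a)\cup\mathrm{row}(b)\cup\mathrm{row}(b')$, and then $|U|\ge 2q$ forces rows $b$ and $b'$ to be full, so $U$ is two full rows. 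If instead every elbow has exactly two points in both its row and its column, then $B$ has no vertex of degree $\ge 3$ and (using $|U|\ge 2q$ once more) is a disjoint union of cycles --- i.e.\ every row and column of $U$ meets it in exactly two points --- but then a short count shows any corner has at least $2q-5\ge 1$ far points, contradicting the standing assumption. So outside the generic case $U$ is two full rows or two full columns, and there I would shatter an aligned pair directly, say $W=\{(c,0),(c,1)\}$ with $(c',0),(c',1),(c',2)$ taken from the other full line; this is where $q\ge 3$ enters.

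The step I expect to be the main obstacle is this dichotomy --- making the analysis of the row- and column-counts at an elbow genuinely exhaustive, and in particular the ``both counts equal $2$'' case, which is where the argument loops back to the counting estimate $2q-5\ge 1$ and to the aligned-pair construction (together these are the only places $q\ge 3$ is used). Checking that the named vertices really realize all four subsets of $W$, and the incidence-graph bookkeeping, should be routine. A PSTT-style induction on $q$ that deletes a sufficiently sparse row together with a column is an alternative route, though the case analysis it leaves is of similar size.
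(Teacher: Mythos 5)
Your argument is correct in outline and reaches the result by a genuinely different route from the paper. The paper isolates a ``row-pluck'' configuration (a principal row with two pivots whose columns are otherwise populated, plus an unrelated point), shatters an \emph{aligned} pair, and locates the configuration by an induction on $q$ that deletes a sparse row--column pair. You instead shatter a \emph{diagonal} pair built from a corner plus a far point, prove the existence of a corner by a clean degree-counting argument in the bipartite column--row incidence graph, and replace the induction by a structural dichotomy: either some corner admits a far point, or $U$ is a union of two full parallel lines, in which case you fall back on an aligned pair (essentially the paper's pluck configuration). Your generic configuration uses four points rather than five, the corner-existence step is slicker than repeated pigeonholing, and the whole proof is non-inductive; the price is the dichotomy's case analysis, which is comparable in length to the paper's inductive step.

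One step needs repair. In the last branch of the dichotomy you assert that if every elbow has exactly two $U$-points in its row and in its column, then $B$ has no vertex of degree $\ge 3$. As stated this does not follow: a column with three points, each of which is the unique $U$-point of its row, produces no elbow in that column at all and so does not contradict the hypothesis. The claim is nevertheless true under your standing assumptions, and the fix is short: if column $a$ had $k\ge 3$ points, each lonely in its row, take the corner you already know exists, with elbow $(c,d)$, second column point $(c,d')$, and second row point $(c'',d)$. Then $a\ne c$ (column $c$ has only two points) and $a\ne c''$ (else $(a,d)$ would lie in column $a$ yet fail to be lonely in its row); likewise no second coordinate of a point of column $a$ can equal $d$ or $d'$ without violating loneliness. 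Hence every point of column $a$ is a far point for that corner, contradicting the standing assumption. With this inserted, the disjoint-union-of-cycles count, the $2q-5\ge 1$ estimate, and the remaining verifications (the adjacency checks for the corner configuration, the two-full-lines shattering via $(c',2)$, and the two uses of $q\ge 3$) all go through.
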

As was the case with Corollary \ref{phpCor}, we can pigeonhole this result to say something about $H(d,q,2)$ for $d\geq 3.$ Specifically, we get the following.

\begin{corollary}\label{phpCor2}
    Given natural numbers $d \geq 2,$ and $q\geq 4,$ and a subset of $U$ of the vertices of $H(d,q,2)$ of size $|U|\geq 2q^{d-1},$ then the VC-dimension of $(U, n(U))$ is 2.
\end{corollary}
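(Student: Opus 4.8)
The plan is to mimic the proof of Corollary \ref{phpCor}, reducing to the planar case and invoking Proposition \ref{prop2q2}. The key structural observation is that $H(d,q,2)$ contains many induced copies of $H(2,q,2)$: fixing the last $d-2$ coordinates to constants partitions the vertex set of $H(d,q,2)$ into $q^{d-2}$ ``slices'', and inside a single slice two vertices agree on those $d-2$ coordinates, so they differ in exactly two coordinates precisely when they differ in both of the two free coordinates — that is, each slice induces a copy of $H(2,q,2)$. Since these slices are induced subgraphs, a set shattered by the neighborhoods within one slice is automatically shattered by $n(U)$ in all of $H(d,q,2)$ (the same witnessing vertices work, and they lie in $U$).

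With this in place, the first step is a pigeonhole count: since $|U|\ge 2q^{d-1}=2q\cdot q^{d-2}$ and there are $q^{d-2}$ slices, some slice $T$ has $|U\cap T|\ge 2q$. Regarding $U\cap T$ as a subset of a copy of $H(2,q,2)$ and applying Proposition \ref{prop2q2} yields a two-element $W\subseteq U\cap T$ shattered inside $T$, hence shattered by $n(U)$ by the previous remark. This proves the lower bound that the VC-dimension of $(U,n(U))$ is at least $2$, and in fact needs only $q\ge 3$.

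The main obstacle is the matching upper bound — ruling out a shattered three-element set. In the $t=1$ analogue of Corollary \ref{phpCor} this was automatic, because $H(d,q)$ has VC-dimension at most $3$ (by \cite{ABM}) and so does every induced subgraph; here there is no such ambient bound of $2$ to exploit. To obtain ``at most $2$'' one would want a distance-$2$ version of Lemma \ref{lemma23} — and, for $d\ge 3$, of Lemma \ref{rectangles} — running through the shapes a putative shattered triple can take (three on a line, a corner, two on a line plus a point off it, three in general position) and obstructing each. I expect the $d=2$ case to be tractable this way; for $d\ge 3$, however, this is not merely hard but fails in general, since e.g.\ $\{(0,0,0),(1,1,0),(0,1,1)\}$ is shattered in $H(3,q,2)$ for every $q\ge 3$ (each proper pair is realized by the omitted third vertex, and the other subsets by suitable vertices such as $(2,2,2)$, $(2,0,2)$, and $(1,0,1)$). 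So for $d\ge 3$ the conclusion the pigeonhole argument actually supports is that the VC-dimension is at least $2$, and the remaining work is to decide precisely when it equals $2$ rather than $3$.
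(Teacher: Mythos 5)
Your lower-bound argument is exactly the paper's intended proof: the paper gives no separate proof of Corollary \ref{phpCor2} beyond the remark that it follows ``as was the case with Corollary \ref{phpCor}'' by pigeonholing, and your version -- fix the last $d-2$ coordinates to get $q^{d-2}$ slices each inducing a copy of $H(2,q,2)$, find a slice carrying at least $2q$ points of $U$, and apply Proposition \ref{prop2q2} there, noting that shattering inside an induced slice is shattering in the whole graph -- is precisely that argument, carried out with the induced-subgraph verification made explicit. So for the conclusion ``VC-dimension at least $2$'' you match the paper.

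Your remaining reservation is not a gap in your proof but a correct observation about the statement. The corollary as printed says the VC-dimension ``is $2$,'' yet it is derived from Proposition \ref{prop2q2}, which only asserts ``at least $2$,'' and unlike the $t=1$ situation of Corollary \ref{phpCor} there is no ambient upper bound to convert ``at least'' into ``equals'': your triple $\{(0,0,0),(1,1,0),(0,1,1)\}$ is pairwise adjacent in $H(3,q,2)$ and is indeed shattered (the omitted third vertex realizes each pair, and vertices such as $(1,0,1)$, $(2,2,2)$, $(2,0,2)$, $(1,2,2)$, $(2,2,1)$ realize the full set, the empty set, and the singletons), and padding with zeros extends this to all $d\geq 3$. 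Since $U=V$ satisfies the size hypothesis, the literal ``is $2$'' fails for $d\geq 3$; the statement should read ``at least $2$,'' in line with the phrasing of Proposition \ref{prop2q2} and Theorem \ref{thm222}. With that reading, your proof is complete and needs only $q\geq 3$.
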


\subsubsection{Row-pluck and column-pluck configurations}
To prove Proposition \ref{prop2q2}, we introduce configurations similar to those used in \cite{PSTT}. If there is a row with at least two points, called a {\it principal row}, and at least two of those points, called {\it pivots}, have other points in their columns, we will call this set of four points a {\it row-pluck} configuration. Any point off of the row or columns of the pivots is called an {\it unrelated} point relative to that particular row-pluck configuration. Reversing the roles of columns and rows will give us a {\it principal column} and a {\it column-pluck} configuration.

We now state and prove a lemma showing that the existence of either one of these configurations in a subset $U$ of the vertices of $H(2,q,2)$ is enough to guarantee that the VC-dimension of $(U,n(U))$ is at least 2.

\begin{lemma}\label{lemmaRich}
    Given a natural number $q\geq 3,$ and subset $U$ of the vertices of $H(2,q,2)$ with a row-pluck (or column-pluck) configuration, as well as an unrelated point relative to that row-pluck (or column-pluck) configuration, the VC-dimension of $(U,n(U))$ is at least 2.
\end{lemma}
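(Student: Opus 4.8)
The plan is to mimic the structure of the proof of Lemma~\ref{lemma22}, but accounting for the fact that in $H(2,q,2)$ two vertices are adjacent precisely when they differ in \emph{both} coordinates. First I would set up notation: let the principal row be at height $r$, and let the two pivots be $x=(a,r)$ and $y=(b,r)$ with $a\neq b$. By definition of a row-pluck configuration, there are points $x'=(a,s)\in U$ and $y'=(b,t)\in U$ with $s\neq r$ and $t\neq r$ in the columns of the two pivots. Let $p=(e,f)$ be the unrelated point, so $e\neq a$, $e\neq b$, and $f\neq r$. The candidate shattering set I would propose is $W:=\{x,y\}=\{(a,r),(b,r)\}$, and I would exhibit four vertices of $U$ whose neighborhoods in $H(2,q,2)$ realize the four subsets $\varnothing,\{x\},\{y\},\{x,y\}$ when intersected with $W$.

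The key observation making this work is that $x$ and $y$ lie on a common row, so they agree in the second coordinate; hence a vertex $v=(g,h)$ is adjacent to $x$ iff $g\neq a$ and $h\neq r$, and adjacent to $y$ iff $g\neq b$ and $h\neq r$. So $v$ is adjacent to \emph{both} iff $h\neq r$ and $g\notin\{a,b\}$; adjacent to \emph{exactly one} iff $h\neq r$ and $g$ equals one of $a,b$; and adjacent to \emph{neither} iff $h=r$. The steps then are: (i) the unrelated point $p=(e,f)$ has $f\neq r$ and $e\notin\{a,b\}$, so $n(p)\cap W=\{x,y\}$; (ii) the pivot-column point $x'=(a,s)$ has $s\neq r$ and first coordinate $a$, so $n(x')\cap W=\{y\}$ — note $x'$ fails to be adjacent to $x$ precisely because it shares $x$'s column; (iii) similarly $y'=(b,t)$ gives $n(y')\cap W=\{x\}$; (iv) for $\varnothing$, any other point of $U$ lying on the principal row works, or failing that, I need to produce a point of $U$ with second coordinate equal to $r$ — but in fact $x$ itself has second coordinate $r$, and since $x\in W$ we need a point \emph{of $U$} distinct from the ones already named whose neighborhood misses $W$; the cleanest choice is to observe that $x=(a,r)$ and $y=(b,r)$ are themselves in $U$ and $n(x)\cap W=\varnothing$ would require... here one must be slightly careful, since $x$ is adjacent to nothing on its own row. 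Actually $n(x)\cap W=\varnothing$ holds because both elements of $W$ lie on row $r$ and $x$ is on row $r$, so $x$ is adjacent to neither $x$ (no loops) nor $y$ (same row). So $x\in U$ itself serves as the witness for $\varnothing$.

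The main obstacle I anticipate is the $\varnothing$ case and ensuring all witnesses are genuinely available and distinct: one must check that the point realizing $\varnothing$, the point realizing $\{x\}$, the point realizing $\{y\}$, and the point realizing $\{x,y\}$ can all be chosen in $U$ (possibly with coincidences among them that are harmless, as in the $t=1$ proofs where e.g. $z$ played the role of $u_{xy}$). Using $x$ (or $y$) itself as the $\varnothing$-witness resolves this without any extra hypothesis, since $x,y\in U$ by assumption and a vertex on row $r$ is adjacent to no other vertex on row $r$ in $H(2,q,2)$. After assembling these four witnesses I would note the argument is symmetric under interchanging rows and columns, which handles the column-pluck case verbatim, and conclude that $W$ is shattered, so the VC-dimension of $(U,n(U))$ is at least $2$. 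I would close by remarking that $q\geq 3$ is used only to guarantee that the various first coordinates ($a$, $b$, and a third value $e$) can be chosen distinct, which is exactly the content of the unrelated-point hypothesis together with $q\ge 3$.
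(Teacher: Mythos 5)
Your proposal is correct and follows essentially the same route as the paper: the two pivots form the shattering set $W=\{x,y\}$, the points in their columns witness the singletons, the unrelated point witnesses $\{x,y\}$, and $x$ itself (lying on the principal row, hence adjacent to neither element of $W$ in $H(2,q,2)$) witnesses $\varnothing$. The paper's proof makes exactly these choices, including using $x$ as $u_0$.
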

\begin{proof}
    Without loss of generality, suppose that there is a row-pluck configuration. We will use the same naming convention for the shattering set $W$ that we have been using. That is, we will find points $x$ and $y$ in $U$ so that $W=\{x,y\}$ can be shattered by $n(u_0), n(u_x), n(u_y),$ and $n(u_{xy}),$ for some choices of $u_0, u_x, u_y, u_{xy}\in U.$
    
    We will refer to the two points on the same row, whose columns are also populated by other points, by the names $x$ and $y.$ The point on the same column as $x$ will be called $u_y.$ We see that $u_y$ is adjacent to $y$ but not to $x,$ as it is Hamming distance 2 to $y$ and Hamming distance 1 to $x.$ Similarly, the point on the same column as $y$ will be called $u_x.$ The unrelated point will serve as $u_{xy},$ and $x$ can be $u_0.$
\end{proof}

\subsubsection{Proof of Proposition \ref{prop2q2}}
This proof is similar to the proof of Proposition \ref{prop22} from \cite{PSTT}. We use induction on $q$ to show that there must be a row-pluck or column-pluck configuration, as well as a suitable unrelated point. For the base case, when $q=3,$ we see that $U$ has size at least $2q = 6.$ Let $R$ be a row with the most points. By the pigeonhole principle, $R$ must have at least two points. If we ignore the two or three points in $R$ for the moment, then we have at three points in the other two rows. Pigeonholing again, we see that there must be another row $R'$ with at least two points. Notice that $R$ and $R'$ each have at least two of their three columns populated, meaning that by pigeonholing again, there must be a column $C$ with a point from $R$ and a point from $C.$ The points $R\cap C$ and $R'\cap C$ will serve as pivots for a column-pluck with principal column $C$. Now, if there is a point unrelated to this column-pluck, we are done by appealing to Lemma \ref{lemmaRich}. If there is not a point unrelated to this column-pluck, that means that at least one of $R$ or $R'$ is full. Since $R$ had the maximum number of points, it is full. In this case, we find a row-pluck with principal row $R',$ with one pivot as $R'\cap C,$ and one of the other columns $C'$ providing the other pivot as $R'\cap C'.$ Finally, the point unrelated to this row-pluck will be $R\setminus(C\cup C'),$ and we are again done by Lemma \ref{lemmaRich}. This completes the base case. 

For the induction hypothesis, we will assume that for any $3\leq p < q,$ the claimed result holds for any subset of the vertices of $H(2,p,2)$ of size $2p.$ We now show that this implies that the claim will also hold for $H(2,q,2).$ Call the row with the most points $R$. Without loss of generality, suppose that it has as many or more points than the column with the most points. Let the number of points on $R$ be called $k,$ which is by definition at most $q.$ By the pigeonhole principle, we know that $k\geq 2.$ Now, either the row $R$ has points in at least two columns $C$ and $C'$ that are populated in other rows or it does not. In the case that it does, $R$ will serve as the principal row and the two points $R\cap C$ and $R\cap C'$ will serve as pivots. Now, either there is a point unrelated to this row-pluck, or there is not. If there is, we are done. If not, we see that all points from $U$ live in $R\cup C \cup C'.$ Here we will find a column-pluck and a point unrelated to it. To see that this must happen, notice that $R$ has $k$ points, meaning that $(C\cup C')\setminus R$ has at least $2q-k\geq q$ points in a total of $q-1$ rows besides $R$. By the pigeonhole principle, that means at least that at least one of the rows $R'$ has points from both $C$ and $C'.$ Without loss of generality, suppose $C$ has as many or more points as $C'.$ We then choose our column-pluck so it has pivots $R\cap C'$ and $R'\cap C'.$ Next we show that and some point from $C\setminus(R\cup R')$ must exist, so it can serve as the unrelated point. To see that this point must exist, we recall that there are at least $q$ points in $(C\cup C')\setminus R,$ at least half of which are on $C,$ and only one of which is on $R'.$ Therefore, as $q\geq 3$ there are at least $\lceil q/2 \rceil - 1 > 1$ potential choices for the unrelated point.

If $R$ does not have two points with otherwise populated columns, then we will reduce to a smaller case. This reduction follows by selecting a column $C$ with a point in $R$ that has the maximal number of points. Let $\ell$ denote the number of points in $C.$ By definition, $\ell\leq k.$ Now remove $R$ and $C,$ and are left with a smaller set of vertices. In particular, as $k\geq 2,$ we are now considering a set of at least $2q-k-(\ell-1)\geq 2(q-1)$ points in $\mathbb Z_{q-1}\times \mathbb Z_{q-1}.$ So we can appeal to the induction hypothesis for $p=q-1,$ and we are done.

\subsubsection{Tightness}
The following result shows that we have the correct exponent and constant multiple on $q$ in the lower bound given in Proposition \ref{prop2q2}.

\begin{proposition}\label{2q2tight}
    For $q\geq 3,$ there exists a subset $U_*(q)$ of the vertices of $H(2,q,2)$ of $|U_*(q)|=2q-1$ so that the VC-dimension of $(U_*(q),n(U_*(q)))$ is strictly less than two.
\end{proposition}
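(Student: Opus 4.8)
The plan is to construct $U_*(q)$ explicitly as the union of one full row and one full column meeting in a single vertex, and then to check directly that no pair of its vertices can be shattered. Concretely, I would set
\[U_*(q) := \{(i,0) : i \in \mathbb{Z}_q\} \cup \{(0,j) : j \in \mathbb{Z}_q\},\]
which has $q + q - 1 = 2q - 1$ vertices since the row and the column overlap only in $(0,0)$. The first step is to read off the induced adjacency structure of $U_*(q)$ inside $H(2,q,2)$, using that two vertices are adjacent exactly when they differ in \emph{both} coordinates. Any two row vertices share their second coordinate, so the row is an independent set; likewise the column is independent; the vertex $(0,0)$ agrees with every row vertex in the second coordinate and with every column vertex in the first coordinate, so it is adjacent to nothing in $U_*(q)$; and each $(i,0)$ with $i \neq 0$ is adjacent to each $(0,j)$ with $j \neq 0$. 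In short, the subgraph induced on $U_*(q)$ is a complete bipartite graph $K_{q-1,q-1}$ between the row minus $(0,0)$ and the column minus $(0,0)$, together with the isolated vertex $(0,0)$.

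The second step is a short exhaustive case analysis showing that no two-element $W = \{x,y\} \subseteq U_*(q)$ is shattered by $n(U_*(q))$. If $(0,0) \in W$, then since no vertex of $U_*(q)$ is adjacent to $(0,0)$, no member of $n(U_*(q))$ meets $W$ in a set containing $(0,0)$, so such a subset cannot be realized. If $x$ and $y$ both lie in the row (the all-column case being symmetric), they are non-adjacent and have the same neighborhood within $U_*(q)$, namely the entire column minus $(0,0)$; hence no vertex is adjacent to exactly one of them, and the singleton $\{x\}$ is not realized. Finally, if $x$ lies in the row and $y$ in the column with neither equal to $(0,0)$, then $x \sim y$, but $n(x) \cap U_*(q)$ sits inside the column while $n(y) \cap U_*(q)$ sits inside the row, and these are disjoint, so nothing is adjacent to both $x$ and $y$ and $\{x,y\}$ is not realized. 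These cases cover all pairs, so the VC-dimension of $(U_*(q), n(U_*(q)))$ is strictly less than $2$ (in fact equal to $1$, since for $q \geq 3$ the singleton $\{(1,0)\}$ is realized by $n((0,1))$ and the empty set by $n((0,0))$).

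I do not expect a genuine obstacle: the construction is elementary and the verification is finite. The only thing that needs attention is the bookkeeping in the singleton cases, and it is exactly the loopless subtlety flagged in the paragraph before Proposition \ref{prop2q2}: because $H(2,q,2)$ has no loops, a vertex is never in its own neighborhood, so realizing $\{x\}$ genuinely requires a vertex distinct from $x$ that is adjacent to $x$ but not to $y$ — which is precisely what the complete-bipartite-plus-isolated-point structure forbids. Keeping track of that (and of the degenerate role of $(0,0)$) is the whole content of the proof.
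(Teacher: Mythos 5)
Your construction is exactly the paper's: the set of vertices with at least one zero coordinate, followed by the same three-case check (a pair containing the isolated vertex $(0,0)$, two collinear points with identical neighborhoods, and a row--column pair with no common neighbor). The argument is correct and matches the paper's proof in both construction and verification.
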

\begin{proof}
    We will define $U_*(q)$ to be the subset of the vertices of $H(2,q,2)$ with at least one coordinate being 0. There are $2q-1$ such points. Now, notice that $(0,0)$ is not adjacent to any other points in the set, so it cannot serve as $x$ or $y.$ So both $x$ and $y$ have exactly one nonzero coordinate. Either they are on the same line or they are not. In the first case, without loss of generality, suppose $x=(a,0)$ and $y=(b,0).$ But then $n(x)=n(y),$ so there is no choice for $u_x.$ In the second case, without loss of generality, suppose $x=(a,0)$ and $y=(0,b).$ But then there is no point adjacent to both of them, so there is no possible choice for $u_{xy}.$
\end{proof}

\end{document}